\newtheorem{theorem}{Theorem}
\newtheorem{prop}{Proposition}
\newtheorem{lemma}{Lemma}
\newtheorem{remark}{Remark}
\newtheorem{definition}{Definition}
\newtheorem*{MSW}{Theorem MSW}
\newtheorem*{GM}{Theorem GM}
\newtheorem{example}{Example}
\begin{document}
\author{Ushangi Goginava }
\title[]{Limits of sequence of Tensor Product Operators associated with the
Walsh-Paley system}
\address{U. Goginava, Department of Mathematical Sciences \\
United Arab Emirates University, P.O. Box No. 15551\\
Al Ain, Abu Dhabi, UAE}
\email{zazagoginava@gmail.com; ugoginava@uaeu.ac.ae}
\address{˙ }
\email{ }
\author{ Farrukh Mukhamedov}
\address{F. Mukhamedov, Department of Mathematical Sciences \\
United Arab Emirates University, P.O. Box No. 15551,\\
Al Ain, Abu Dhabi, UAE}
\email{farrukh.m@uaeu.ac.ae}

\begin{abstract}
It is well-known that to establish the almost everywhere convergence of a sequence of operators on $L_1$-space, it is sufficient
to obtain a weak $(1,1)$-type inequality for the maximal
operator corresponding to the sequence of operators. However, in practical
applications, the establishment of the mentioned inequality for the
maximal operators is very tricky and difficult job. In the present paper, the
main aim is a novel outlook at above mentioned inequality for the tensor
product of two weighted one-dimensional Walsh-Fourier series. Namely, our
main idea is naturally to consider uniformly boundedness conditions for the
sequences which imply the weak type estimation for the maximal operator of
the tensor product. More precisely, we are going to
establish weak type of inequality for the tensor product of two dimensional
maximal operators while having the uniform boundedness of the corresponding
one dimensional operators. Moreover, the convergence of the tensor
product of operators is established at the two dimensional Walsh-Lebesgue points.
\end{abstract}

\maketitle

\bigskip \footnotetext{%
2020 Mathematics Subject Classification: 42C10, 40D05, 42B08, 	42B25.
\par
Key words and phrases: Walsh system; Boundedness of sequence of operators;
tensor product system; weak type inequality; ˇ Almost everywhere convergence}

\section{Introduction}

The present paper deals with the investigation of one of the central topics
in the harmonic analysis that of the almost everywhere convergence of the
weighted Walsh-Fourier series associated with tensor products. Before, to
formulate the main results let us discuss some background information about
the main questions in such a theme.

\subsection{Background history of one-dimensional Fourier series}

Kolmogoroff \cite{kolmogoroff1923serie, kolmogororf1926s} solved Luzin's
problem by proving the existence of an integrable function with divergent
trigonometric Fourier series. Furthermore, Stein \cite{Stein} addressed a
related issue with Walsh systems, namely he linked the almost
everywhere~convergence of a sequence of operators to the boundedness of the
corresponding maximum operator. To be more precise, let $\{T_{m}\}$ be a
sequence of linear bounded operators from $L_{1}(\mathbb{I})$ into self
(where $\mathbb{I}=[0,1)$). Assume that each $T_{m}$ commutes with
translations of $\mathbb{I}$. Then almost everywhere convergence of $%
T_{m}(f)(x)$ for every $f\in L_{1}(\mathbb{I})$ is equivalent to the boundedness
of the maximal operator defined by
\begin{equation*}
T^{\ast }(f)(x)=\sup_{m\geq 1}\left\vert \left( T_{m}f\right) (x)\right\vert
\end{equation*}%
from $L_{1}(\mathbb{I})$ to the space $L_{1,\infty }(\mathbb{I}),$ i. e.%
\begin{equation}
\left\Vert T^{\ast }(f)\right\Vert _{L_{1,\infty }(\mathbb{I})}\leq
c\left\Vert f\right\Vert _{1}.  \label{wk-11}
\end{equation}%
Here the space $L_{1,\infty }(\mathbb{I})$ consists of all measurable
functions $f\in L_{0}\left( \mathbb{I}\right) $ such that
\begin{equation*}
\left\Vert f\right\Vert _{L_{1,\infty }(\mathbb{I})}:=\sup\limits_{t>0}t\mu
\left( \left\vert f\right\vert >t\right) <\infty ,
\end{equation*}%
where $\mu $ denotes the one-dimensional Lebesgue measure of the set.

However, from the Stein's result one yields that the inequality (\ref{wk-11}%
) does not hold for the Walsh-Carleson operator, the last one is defined by
\begin{equation*}
WCf\left( x\right) =\sup\limits_{n\geq 0}\left\vert f\ast D_{n}\right\vert ,
\end{equation*}%
where $D_{n}$ represents the Dirichlet kernel with relation to the
Walsh-Paley system. In this context, it is reasonable to explore the
weighted maximum operators that guarantee the fulfilment of inequality (\ref%
{wk-11}) and, as a result, the almost everywhere convergence of the
corresponding operator sequences would be obtained, for any integrable
function.

To be more precise, let us consider an infinite matrix $\mathbb{T}:=\left(
t_{k,n}\right) $ satisfying the following conditions \footnote{%
In the sequel, by $\mathbb{P}$ we denote the set of positive integers, and $%
\mathbb{N}\mathbf{:=}\mathbb{P}\mathbf{\cup \{}0\mathbf{\}}$. The following
notations will also be used below: $\mathbb{P}^{2}:=\mathbb{P}\times \mathbb{%
P},\mathbb{N}^{2}:=\mathbb{N}\times \mathbb{N}$.}:

\begin{itemize}
\item[(a)] $t_{k,n}\geq 0$, $k,n\in \mathbb{N};$

\item[(b)] $0\leq t_{k+1,n}\leq t_{k,n},k,n\in \mathbb{N}$;

\item[(c)] $\sum\limits_{k=0}^{n}t_{k,n}=1.$
\end{itemize}

In the sequel, such kind of matrix is called \textit{matrix of transformation%
}.

Now, we define a sequence of operators associated with $\mathbb{T}$ and the
Walsh-Fourier series as follows
\begin{equation}
\mathcal{T}^{\mathbb{T}}_{n}(f;x):=\sum_{k=0}^{n}t_{n-k,n}S_{k}(f;x)\quad
(n\in {\mathbb{N}}),  \label{weight-mean}
\end{equation}%
where $S_{k}(f;x)$ denotes the $k$-th partial sum of the Walsh-Fourier
series of a function $f$ at a point $x$.

Then, the Walsh-Carleson weighted operator is defined by
\begin{equation}
\mathcal{T}^{\ast,\mathbb{T} }\left( f\right) :=\sup\limits_{n\in \mathbb{N}%
}\left\vert \mathcal{T}^{\mathbb{T}}_{n}(f)\right\vert .  \label{WC}
\end{equation}

\begin{remark}\label{m11}
Let us consider some particular cases of the matrix transformations $\mathbb{%
T}$ and the corresponding operators $\mathcal{T}_n$.

\begin{itemize}
\item[(I)] Assume that $t_{k,n}:=1$ when $k=0$ and equals $0$ for the other $%
k$. Then $\mathcal{T}_{n}(f;x)=S_{n}(f;x)$.

\item[(II)] Let $\mathbb{F}=(t_{k,n})$ be a matrix transformation such that $%
t_{k,n}=\frac{1}{n}$, then $\mathcal{T}_{n}^{\mathbb{F}}\left( f\right)
=\sigma _{n}\left( f\right) $ (Fejér means).

\item[(III)] Now, let $\mathbb{A}=(t_{k,n})$ be a matrix transformation such
that $t_{k,n}:=\frac{A_{k}^{\alpha _{n}-1}}{A_{n}^{\alpha _{n}}}$ $\left(
\alpha _{n}\in \left( 0,1\right] \ \ n\in \mathbb{N}\right) ,$ then $%
\mathcal{T}_{n}^{\mathbb{A}}\left( f\right) =\sigma _{n}^{\alpha _{n}}\left(
f\right) $ ($\left( C,\alpha _{n}\right) $-means).

\item[(IV)] Consider $\mathbb{L}=(t_{k,n})$ a matrix transformation such
that $t_{k,n}:=\frac{1}{\ell _{n}}\frac{1}{k+1},$ where $\ell
_{n}:=\sum_{k=0}^{n}\frac{1}{k+1}$, then $\mathcal{T}_{n}^{\mathbb{L}}\left(
f\right) =M_{n}\left( f\right) $ (Nörlund logarithmic means).
\end{itemize}
\end{remark}

\begin{remark}
\label{3} It is stressed that in all above considered examples, the sequence
$t_{k,n}$ is nonincreasing with respect to $k$, while $n$ is fixed. In the
current paper, we do not investigate the scenario when $t_{k,n}$ increases ($%
n$ is fixed), since in that case, the inequality (\ref{wk-11}) is always
satisfied, and hence the sequence (\ref{weight-mean}) is almost everywhere
convergent for every integrable function.
\end{remark}

\begin{remark}
\label{4} We point out that in example (I), the maximal operator $\mathcal{T}%
^{\ast }\left( f\right) $ clearly corresponds with the Walsh-Carleson
operator, and so, as we mentioned earlier, by Stein's result reveals \cite%
{Stein} that the inequality (\ref{wk-11}) fails.

In the case (II), the maximal operator $\mathcal{T}^{\ast ,\mathbb{F}}\left(
f\right) $ enjoys the inequality (\ref{wk-11}) \cite{Fine1955cesaro}.

The case (III) may be separated into two scenarios w.r.t. $\{\alpha _{n}\}$:
(a) the sequence $\{\alpha _{n}\}$ is stationary, i.e. $\alpha _{n}=\alpha
\in \left( 0,1\right] $, and (b) when $\{\alpha _{n}\}$ approaches to zero.
In the first situation, the inequality (\ref{wk-11}) was established in \cite%
{schipp1975certain}, while in the second case, the violation of the
inequality has been recently demonstrated in \cite{GoginavaJMAA23} no matter
how slowly the sequence goes to zero.

In the example (IV), the inequality is not satisfied, for $\mathcal{T}^{\ast,%
\mathbb{L} }\left( f\right)$ (see, \cite{GatGogAMSDiv2}).
\end{remark}

From these observations, we infer that it is important to find necessary and
sufficient conditions for the fulfillment of \eqref{wk-11}. Recently, in \cite%
{GMFAA} a sufficient conditions have been established in terms of the
uniform boundedness of the operator sequences $\{\mathcal{T}_n\}$. 

Recall that by $H_{1}\left( \mathbb{I}\right)$ denote the dyadic Hardy space. Now, we formulate next result taken from \cite{GMFAA} which will be employed to prove one of our main result (see Theorem \ref{mt1}).

\begin{GM}
\label{eq} Let $\mathbb{T}$ be a matrix of transformation, and $\left\{
n_{a}:a\in \mathbb{N}\right\} $ be a subsequence of natural numbers.
Consider the sequence of $\left\{ \mathcal{T}_{n_{a}}^{\mathbb{T}}\right\} $
given by \eqref{weight-mean}. Then the following statements are equivalent:

\begin{itemize}
\item[(i)] one has
\begin{equation*}
\sup\limits_{a\in \mathbb{N}}\Vert \mathcal{T}^{\mathbb{T}}_{n_{a}}\Vert
_{H_{1}\left( \mathbb{I}\right) \rightarrow L_{1}\left( \mathbb{I}\right)
}<\infty;
\end{equation*}

\item[(ii)] one has
\begin{equation*}
\sup\limits_{a\in \mathbb{N}}\Vert \mathcal{T}^{\mathbb{T}}_{n_{a}}\Vert
_{L_{\infty }\left( \mathbb{I}\right) \rightarrow L_{\infty }\left( \mathbb{I%
}\right) }<\infty;
\end{equation*}

\item[(iii)] one has
\begin{equation*}
\sup\limits_{a\in \mathbb{N}}\upsilon (n_{a},\mathbb{T})<\infty,
\end{equation*}
where
\begin{equation*}
\upsilon (n,\mathbb{T}):=\sum_{k=0}^{|n|}|\varepsilon _{k}(n)-\varepsilon
_{k+1}(n)|\tau _{2^{k},n},
\end{equation*}
and $\tau_{s,n}:=\sum\limits_{l=0}^{s}t_{l,n}$\footnote{%
For every given number $n\in \mathbb{P}$, the following unique
representation has a place%
\begin{equation*}
n=\sum\limits_{k=0}^{\infty }\varepsilon _{k}\left( n\right) 2^{k}
\end{equation*}
where $\varepsilon _{k}\left( n\right) \in \left\{ 0,1\right\} $ and $%
\varepsilon _{k}\left( n\right) $ will be called the binary coefficients of $%
n$. The Walsh(-Paley) system $\left( w_{n}:n\in \mathbb{N}\right) $ was
introduced by Paley in 1932 as product of Rademacher functions (see \cite[p.
1]{SWS} )}.
\end{itemize}

\end{GM}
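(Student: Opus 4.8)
The plan is to transfer the three conditions to the convolution kernel of $\mathcal T^{\mathbb T}_{n}$ and to show that each of them is governed, up to absolute constants, by the arithmetic quantity $\upsilon(n,\mathbb T)$. Since $S_{k}(f)=f\ast D_{k}$, with $D_{k}$ the Walsh--Dirichlet kernel, one has $\mathcal T^{\mathbb T}_{n}(f)=f\ast K^{\mathbb T}_{n}$, where
\begin{equation*}
K^{\mathbb T}_{n}:=\sum_{k=0}^{n}t_{n-k,n}D_{k}.
\end{equation*}
As $\mathcal T^{\mathbb T}_{n}$ is convolution by the integrable kernel $K^{\mathbb T}_{n}$ on the dyadic group, the classical identities for convolution operators give $\|\mathcal T^{\mathbb T}_{n}\|_{L_{\infty}\to L_{\infty}}=\|\mathcal T^{\mathbb T}_{n}\|_{L_{1}\to L_{1}}=\|K^{\mathbb T}_{n}\|_{1}$, so that (ii) is equivalent to $\sup_{a}\|K^{\mathbb T}_{n_{a}}\|_{1}<\infty$. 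Moreover $H_{1}(\mathbb I)$ embeds contractively into $L_{1}(\mathbb I)$, whence $\|\mathcal T^{\mathbb T}_{n}\|_{H_{1}\to L_{1}}\le\|\mathcal T^{\mathbb T}_{n}\|_{L_{1}\to L_{1}}$; this already yields (ii)$\Rightarrow$(i). It therefore remains to prove (iii)$\Rightarrow$(ii) and (i)$\Rightarrow$(iii).

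The analytic core, underlying both, is the estimate $\|K^{\mathbb T}_{n}\|_{1}\asymp\upsilon(n,\mathbb T)$, together with the localized variant
\begin{equation*}
\Bigl\|\sum_{j=1}^{M}a_{M-j}D_{j}\Bigr\|_{1}\asymp\sum_{k=0}^{|M|}\bigl|\varepsilon_{k}(M)-\varepsilon_{k+1}(M)\bigr|\,\sigma_{2^{k}},\qquad \sigma_{s}:=\sum_{l=0}^{s}a_{l},
\end{equation*}
valid for every nonnegative nonincreasing sequence $(a_{l})$ and every $M\in\mathbb P$ (the choice $a_{l}=t_{l,n}$, $\sigma_{s}=\tau_{s,n}$, $M=n$ recovers the first estimate, since $D_{0}=0$). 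I would prove it by an Abel summation on $\sum_{j}a_{M-j}D_{j}$ which, using the monotonicity of $(a_{l})$, re-expresses the sum through the partial sums $\sigma_{s}$ multiplying the ``good'' blocks $D_{2^{k}}=2^{k}\mathbf 1_{[0,2^{-k})}$, after resolving each Dirichlet kernel into such blocks along its binary digits by the standard product formula; splitting the index range into the dyadic intervals $[2^{j},2^{j+1})$ and invoking the classical Walsh analogue of the Lebesgue--constant identity $\|D_{m}\|_{1}\asymp\sum_{k}|\varepsilon_{k}(m)-\varepsilon_{k+1}(m)|$ then gives both inequalities. This bookkeeping is the main obstacle of the whole proof, and it is where the hypotheses on $\mathbb T$ are really used: in general $H_{1}\to L_{1}$ boundedness is strictly weaker than $L_{1}\to L_{1}$ boundedness (as the Hilbert transform shows), so the equivalence (i)$\Leftrightarrow$(ii) can hold only because the $K^{\mathbb T}_{n}$ are averages of Dirichlet kernels with monotone weights, and this has to be exploited through the block decomposition. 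Granting the core estimate, (iii)$\Rightarrow$(ii) is immediate.

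For (i)$\Rightarrow$(iii) I argue by contraposition: assume $\sup_{a}\upsilon(n_{a},\mathbb T)=\infty$ and test $\mathcal T^{\mathbb T}_{n_{a}}$ on the single dyadic atom $g_{a}:=D_{2^{|n_{a}|+1}}-D_{2^{|n_{a}|}}=w_{2^{|n_{a}|}}D_{2^{|n_{a}|}}$, whose Walsh spectrum lies in one dyadic block, so $\|g_{a}\|_{H_{1}}=\|g_{a}\|_{1}=1$. From $\mathcal T^{\mathbb T}_{n}(w_{j})=\tau_{n-j-1,n}w_{j}$ for $j<n$ (and $=0$ for $j\ge n$) one obtains $\mathcal T^{\mathbb T}_{n_{a}}g_{a}=\sum_{j=2^{|n_{a}|}}^{n_{a}-1}\tau_{n_{a}-j-1,n_{a}}w_{j}$, and an Abel summation converts this into $w_{2^{|n_{a}|}}\sum_{j=1}^{M_{a}}t_{M_{a}-j,n_{a}}D_{j}$ with $M_{a}:=n_{a}-2^{|n_{a}|}$. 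Applying the localized estimate above (with $a_{l}=t_{l,n_{a}}$, $\sigma_{s}=\tau_{s,n_{a}}$) and noting that $\varepsilon_{k}(M_{a})=\varepsilon_{k}(n_{a})$ for $k\le|M_{a}|$, one finds $\|\mathcal T^{\mathbb T}_{n_{a}}g_{a}\|_{1}\ge c\sum_{k\le|M_{a}|}|\varepsilon_{k}(n_{a})-\varepsilon_{k+1}(n_{a})|\tau_{2^{k},n_{a}}\ge c\,\upsilon(n_{a},\mathbb T)-C$, because $\upsilon(n_{a},\mathbb T)$ and this partial sum differ only by the two top scales, each of whose contributions is at most $1$ (and $\upsilon(n_{a},\mathbb T)\to\infty$ forces $M_{a}\to\infty$). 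Hence $\|\mathcal T^{\mathbb T}_{n_{a}}\|_{H_{1}\to L_{1}}\ge c\,\upsilon(n_{a},\mathbb T)-C\to\infty$, contradicting (i); this closes the cycle (iii)$\Rightarrow$(ii)$\Rightarrow$(i)$\Rightarrow$(iii).
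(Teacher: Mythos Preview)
The paper does not prove Theorem~GM; it is quoted from \cite{GMFAA} and used as a black box in the proof of Theorem~\ref{mt1}, so there is no proof in this paper against which to compare your sketch.

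That said, your strategy is correct and is essentially the one of the cited source. The identifications $\|\mathcal T^{\mathbb T}_{n}\|_{L_\infty\to L_\infty}=\|V^{\mathbb T}_n\|_1$ (the paper writes $V^{\mathbb T}_n$ for your $K^{\mathbb T}_n$) and $H_1\hookrightarrow L_1$ give (ii)$\Rightarrow$(i) for free; the upper bound $\|V^{\mathbb T}_n\|_1\lesssim\upsilon(n,\mathbb T)+1$, which yields (iii)$\Rightarrow$(ii), is in fact visible in the present paper through the decomposition $V^{\mathbb T}_n=V^{\mathbb T}_{n,1}+V^{\mathbb T}_{n,2}$ of \eqref{V1+V2}--\eqref{V12} together with the Abel rewriting \eqref{vv} and the uniform $L_1$-bound for Fej\'er kernels. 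Your atom $g_a=w_{2^{|n_a|}}D_{2^{|n_a|}}$ is the right test function for (i)$\Rightarrow$(iii) (it is a genuine $H_1$-atom with $\|g_a\|_{H_1}=1$), the computation $\mathcal T^{\mathbb T}_{n_a}g_a=w_{2^{|n_a|}}\sum_{j=1}^{M_a}t_{M_a-j,n_a}D_j$ with $M_a=n_a-2^{|n_a|}$ is correct, and your remark that the resulting $M_a$-sum and $\upsilon(n_a,\mathbb T)$ differ by at most an absolute constant (the digits of $M_a$ and $n_a$ agree below $|M_a|$, and the remaining two or three top scales each contribute at most $\tau_{2^k,n_a}\le 1$) is also correct.

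The one place the sketch is genuinely thin is the \emph{lower} bound in your ``localized variant'', which is precisely what (i)$\Rightarrow$(iii) requires. Your description (``Abel summation \ldots\ invoking $\|D_m\|_1\asymp\sum_k|\varepsilon_k(m)-\varepsilon_{k+1}(m)|$'') is suggestive but does not explain how one prevents cancellation between the blocks; this is where the monotonicity hypothesis on $(t_{l,n})_l$ must be used in earnest, and it is the real work of the theorem. Concretely, after the analogue of \eqref{vv} the main term is a signed sum $\sum_s(\varepsilon_{s-1}-\varepsilon_s)\tau_{\cdot,n}D_{2^s}$, and one must argue---for instance by restricting to the coronae $I_{s}\setminus I_{s+1}$, where each $D_{2^j}$ is constant---that neither the secondary terms (controlled by $\sum_s 2^s t_{2^s,n}\le 2$) nor the Fej\'er-type remainder $V^{\mathbb T}_{n,2}$ can cancel it. With that step spelled out, the argument closes.
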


\subsection{Background history of two-dimensional Fourier series}

In what follows, by $D_{n}$ we denote the Walsh-Dirichlet kernel of order $n$
(see \cite[p.27]{SWS}). Let, as before, $\mathbb{T}:=\left( t_{k,n}:k,n\in
\mathbb{N}\right) $ be a matrix transformation. Consider the sequence of
kernels $V^{\mathbb{T}}_{n} $ resulting from \eqref{weight-mean} via
Walsh-Dirichlet kernels $D_{n}$, i.e.
\begin{equation}
{V}^{\mathbb{T}}_{n}(u):=\sum_{k=1}^{n}t_{n-k,n}D_{k}(u).  \label{V}
\end{equation}%
As we mentioned earlier, if consider the matrix ${\mathbb{F}}$, then ${V}^{%
\mathbb{F}}_{n}$ coincides with Fej\'er's kernel, which is denoted by $K_{n}$.

One can establish that
\begin{equation}
\mathcal{T}^{\mathbb{T}}_{n}f\left( x\right) =\left( f\ast {V}^{\mathbb{T}%
}_{n}\right) \left( x\right):=\int_{\mathbb{I}}f(t){V}^{\mathbb{T}%
}_{n}(x\dotplus t)d\mu(t).  \label{tn=}
\end{equation}

Now, assume that $f$ is an integrable function of two variables, i.e. $f\in
L_{1}\left( \mathbb{I}^{2}\right) $. We define the sequence of operators $\{%
\mathfrak{T}_{n^{0},n^{1}}f\left( x^{0},x^{1}\right) \}_{\left(
n^{0},n^{1}\right) \in \mathbb{N}^{2}}$ as a tensor product of the sequences
of one-dimensional operators \eqref{tn=} . More exactly,%
\begin{equation}
\mathfrak{T}_{n^{0},n^{1}}f\left( x^{0},x^{1}\right) =\left( f\ast \left(
V_{n^{0}}^{^{\mathbb{T}_{0}}}\otimes V_{n^{1}}^{^{\mathbb{T}_{1}}}\right)
\right) \left( x^{0},x^{1}\right) .  \label{tens}
\end{equation}%
Here, as before, $V_{n^{0}}^{\mathbb{T}_{0}}$ and $V_{n^{1}}^{\mathbb{T}%
_{1}} $ are the kernels associated with the matrices $\mathbb{T}_{0}$ and $%
\mathbb{T}_{1}$ of transformation, respectively.

One of the important themes of the multidimensional Fourier series is to
investigate the almost everywhere convergence of the sequence $\{\mathcal{T}%
_{n^{0},n^{1}}f\left( x^{0},x^{1}\right) \}_{\left( n^{0},n^{1}\right) \in
\mathbb{N}^{2}}$. For example, to achieve the convergence in measure or norm
of the sequence \eqref{tens}, the iteration technique is applied to convert
a two-dimensional Fourier series sequence into two one-dimensional
operators. To achieve it, we need to defined the operators $\mathcal{T}%
_{n^{0}}^{\mathbb{T}_{0}}$ and $\mathcal{T}_{n^{1}}^{\mathbb{T}_{1}}$ on $%
L_{1}\left( \mathbb{I}^{2}\right) $ by
\begin{eqnarray}
&&\mathcal{T}_{n^{0}}^{\mathbb{T}_{0}}f\left( x^{0},x^{1}\right) =\int_{%
\mathbb{I}}f(u^{0},x^{1}){V}_{n^{0}}^{\mathbb{T}_{0}}(x^{0}\dotplus
u^{0})d\mu (u^{0}).  \label{iter1} \\[2mm]
&&\mathcal{T}_{n^{1}}^{\mathbb{T}_{1}}f\left( x^{0},x^{1}\right) =\int_{%
\mathbb{I}}f(x^{0},u^{1}){V}_{n^{1}}^{\mathbb{T}_{1}}(x^{1}\dotplus
u^{1})d\mu (u^{1}).  \label{iter2}
\end{eqnarray}%
Symbolically, the defined operators can be written as follows:
\begin{eqnarray*}
&&\mathcal{T}_{n^{0}}^{\mathbb{T}_{0}}f\left( x^{0},x^{1}\right) =\left(
f\ast V_{n^{0}}^{\mathbb{T}_{0}}\right) \left( x^{0},x^{1}\right) , \\[2mm]
&&\mathcal{T}_{n^{1}}^{\mathbb{T}_{1}}f\left( x^{0},x^{1}\right) =\left(
f\ast V_{n^{1}}^{\mathbb{T}_{1}}\right) \left( x^{0},x^{1}\right) .
\end{eqnarray*}

From \eqref{iter1} one infers that the operator $\mathcal{T}_{n^{0}}^{%
\mathbb{T}_{0}}$ only acts on~the first variable and leaves the second
variable fixed. Similarly, the operator $\mathcal{T}_{n^{1}}^{\mathbb{T}%
_{1}} $ only acts on~the second variable and leaves the first one fixed.
Because the function $f$ is integrable on the square $\mathbb{I}^{2}$, the
sequence of operators \eqref{weight-mean} applies for almost all $x^{1}$. It
then follows from (\ref{iter1}), (\ref{iter2}) that
\begin{equation}
\mathfrak{T}_{n^{0},n^{1}}f\left( x^{0},x^{1}\right) =\left( \mathcal{T}%
_{n^{1}}^{\mathbb{T}_{1}}\left( \mathcal{T}_{n^{0}}^{\mathbb{T}_{0}}f\right)
\right) \left( x^{0},x^{1}\right) =\left( \mathcal{T}_{n^{0}}^{\mathbb{T}%
_{0}}\left( \mathcal{T}_{n^{1}}^{\mathbb{T}_{1}}f\right) \right) \left(
x^{0},x^{1}\right) .  \label{iter}
\end{equation}

Therefore, in what follows, the defined operator is denoted by
\begin{equation}
\mathfrak{T}_{n^{0},n^{1}}=\mathcal{T}_{n^{0}}^{\mathbb{T}_{0}}\otimes
\mathcal{T}_{n^{1}}^{\mathbb{T}_{1}}.  \label{tens1}
\end{equation}

The equation (\ref{iter}) allows to transform one-dimensional findings to
the two-dimensional setting. However, a class deformation may occur during
such a passage. On the other hand, the almost everywhere convergence is
tightly connected with the maximum operator's boundedness, unfortunately,
the same logic is not applicable in such a situation. The widely known
continuous Fefferman's function \cite{Feff-div-1971} is an example of this
phenomenon. For instance, the rectangular partial sums of its double Fourier
trigonometric series do not converge almost everywhere for that continuous
function. In this direction, Getsadze \cite{Gets-ContDiv-1985,
Gets-ConDiv-2020, Gets-ContDiv-2021} has developed an analogous example for
the Walsh system.

\begin{remark}
Let us highlight some well-known results related to the two dimensional
Fourier/Walsh-Fourier series.
\end{remark}

\begin{itemize}
\item Jessen, Marcinkiewicz and Zygmund \cite{jessen1935note,
marcinkiewicz1939summability} considered $\mathcal{T}_{n^{0}}^{\mathbb{F}%
}\otimes \mathcal{T}_{n^{1}}^{\mathbb{F}}$ with regard to the trigonometric
system. It was established that if $f\in L\ln L({\mathbb{I}}^{2})$ \footnote{%
The positive logarithm function is defined as $\ln ^{+}(x):=\mathbf{1}%
_{\left\{ x>1\right\} }\ln (x).$ We say that the function $f\in L_{1}(%
\mathbb{I}^{2})$ belongs to the logarithmic space $L\ln L({\mathbb{I}}^{2})$
if the integral $\int_{\mathbb{I}^{2}}|f|\ln ^{+}|f|$ is finite.}, then
sequence of operators $\mathcal{T}_{n^{0}}^{\mathbb{F}}\otimes \mathcal{T}%
_{n^{1}}^{\mathbb{F}}f$ converges almost everywhere to $f$.

\item Herriot \cite{Herriot1,Herriot2} investigated almost everywhere
convergence, pointwise convergence, as well as issues with the localization
of sequences of operators derived by matrix transformations for double
trigonometric Fourier series.

\item Furthermore, Moricz, Shipp, Wade \cite{MorSchWade-1992} have developed
a similar theory for the Walsh-Paley system. Later on, Gát \cite%
{GatProc-2000} showed that the mentioned results by Móricz, Schipp, and Wade
cannot be further sharpened.

\item In \cite{weisz1996cesaro, WeiszUnrestr-2000, Weisz-book2} Weisz used
dyadic Hardy spaces to investigate the almost everywhere summability of
Walsh-Fourier series. In particularly, he proved that if $f$ belongs to $%
L\ln L({\mathbb{I}}^{2})$, then the sequence of operators $\mathcal{T}%
_{n^{0}}^{\mathbb{A}}\otimes \mathcal{T}_{n^{1}}^{\mathbb{A}}f$with regard
to the Walsh-Paley system converges almost everywhere to $f$. Here the
matrix $\mathbb{A}$ is given in Remark \ref{m11} (III).

\item In this vein, we mention that Olevskii \cite{OleCont-1961,
OlevONS1961, OlevONS1963}, Kazarian \cite{KazDiv1991, KazDiv2003, KazDiv2004}%
, Karagulyan \cite{KarDiv1989, KarDiv2013}, and Getsadze \cite{Gets2007,
GetsDiv2013} have investigated the divergence of operator sequences with
respect to a general orthonormal systems.

\item Recently, Gát and Karagulyan \cite{KarTens2016} have established that $%
L\ln L({\mathbb{I}}^{2})$ space is a maximum Orlicz space where the sequence
of operators $\mathcal{T}_{n^{0}}^{\mathbb{T}_{0}}\otimes \mathcal{T}%
_{n^{1}}^{\mathbb{T}_{1}}$ converges almost everywhere to $f$ as $\min
\left\{ n^{0},n^{1}\right\} \rightarrow \infty $.

\item Oniani \cite{Oniani2023} generalized Stein and Sawyer's weak type
maximal principles to nets of operators based on functions in general
measure spaces, including locally compact groups.
\end{itemize}

Móricz, Schipp, and Wade \cite{MorSchWade-1992} proved the following
theorem, which enables iteration of the appropriate maximal operator of a
two-parameter sequence.

\begin{MSW}
\label{aa}Let $\left( V_{n}^{i},n\in \mathbb{N}\right)$, $i=0,1$ be the
sequence of $L_{1}\left( \mathbb{I}\right) $ functions. Define
one-dimensional operators%
\begin{equation*}
T^{i}f:=\sup\limits_{m\in \mathbb{N}}\left\vert f\ast V_{m}^{i}\right\vert ,%
\widetilde{T}^{i}f:=\sup\limits_{m\in \mathbb{N}}\left\vert f\ast \left\vert
V_{m}^{i}\right\vert \right\vert, \ \ f\in L_{1}\left( \mathbb{I}\right),
i\in\{0,1\}.
\end{equation*}%
Assume that there exist absolute constants $c_{0},c_{1}$ such that%
\begin{equation}
\mu \left( \left\{ \widetilde{T}^{0}f>\lambda \right\} \right) \leq \frac{%
c_{0}}{\lambda }\left\Vert f\right\Vert _{1}  \label{wk}
\end{equation}%
and%
\begin{equation}
\left\Vert T^{1}f\right\Vert _{1}\leq c_{1}\left\Vert f\right\Vert _{H_{1}}
\label{H1}
\end{equation}%
for $f\in L_{1}\left( \mathbb{I}\right) $ and $\lambda >0$. If%
\begin{equation}
Tf:=\sup\limits_{\left( n,m\right) \in \mathbb{N}^{2}}\left\vert f\ast
\left( V_{n}^{0}\otimes V_{m}^{1}\right) \right\vert ,  \label{KP}
\end{equation}%
then \footnote{We use the notation $a \lesssim b$ for
$a<c \cdot b,$ where $c>0$ is an absolute constant. }
\begin{equation}
\left\Vert Tf\right\Vert _{L_{1,\infty }(\mathbb{I}^2)}\lesssim
1+\int\limits_{\mathbb{I}^{2}}\left\vert f\right\vert \ln ^{+}\left\vert
f\right\vert  \quad \left( f\in L\ln L\left( \mathbb{I}^{2}\right)
\right) .  \label{wk-1}
\end{equation}
\end{MSW}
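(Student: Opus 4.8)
The plan is to dominate the two-dimensional maximal operator $Tf$ from \eqref{KP} pointwise by an iteration of the one-dimensional operators in which $\widetilde T^{0}$ acts on the first variable and $T^{1}$ on the second, so that the two hypotheses \eqref{wk} and \eqref{H1} each attach to the correct variable. First I would write, for fixed $n,m$,
\[
f\ast\bigl(V^{0}_{n}\otimes V^{1}_{m}\bigr)(x^{0},x^{1})=\int_{\mathbb{I}}V^{0}_{n}(x^{0}\dotplus u^{0})\,G_{m}(u^{0},x^{1})\,d\mu(u^{0}),
\]
where $G_{m}(u^{0},x^{1}):=\bigl(f(u^{0},\cdot)\ast V^{1}_{m}\bigr)(x^{1})$ is the one-dimensional convolution in the second variable. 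Introducing
\[
H(u^{0},x^{1}):=T^{1}\bigl[f(u^{0},\cdot)\bigr](x^{1})=\sup_{m\in\mathbb{N}}\bigl|G_{m}(u^{0},x^{1})\bigr|,
\]
and estimating $|G_{m}|\le H$ under the integral sign gives $\bigl|f\ast(V^{0}_{n}\otimes V^{1}_{m})(x^{0},x^{1})\bigr|\le\bigl(H(\cdot,x^{1})\ast|V^{0}_{n}|\bigr)(x^{0})$. Since $H\ge 0$ and $|V^{0}_{n}|\ge 0$, taking the supremum over $n$ and $m$ yields the key pointwise domination
\[
Tf(x^{0},x^{1})\le\widetilde T^{0}\bigl[H(\cdot,x^{1})\bigr](x^{0}).
\]

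The decisive point, and the main obstacle, is that after replacing $|G_{m}|$ by its supremum $H$ one can no longer exploit cancellation in the outer convolution against $V^{0}_{n}$, so the absolute value must be pulled inside that kernel. This is precisely why the first variable is controlled by $\widetilde T^{0}$ (for which the weak $(1,1)$ bound \eqref{wk} is assumed) rather than by $T^{0}$, while the second variable retains its cancellation and is handled by $T^{1}$ through the $H_{1}\to L_{1}$ bound \eqref{H1}; this asymmetry is exactly matched to the asymmetry of the hypotheses.

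With the domination in hand, I would apply \eqref{wk} in the variable $x^{0}$ for each fixed $x^{1}$ and integrate in $x^{1}$ by Fubini, using $\|H(\cdot,x^{1})\|_{1}=\int_{\mathbb{I}}H$ since $H\ge 0$:
\[
\mu\bigl(\{Tf>\lambda\}\bigr)\le\int_{\mathbb{I}}\mu\Bigl(\bigl\{x^{0}:\widetilde T^{0}[H(\cdot,x^{1})](x^{0})>\lambda\bigr\}\Bigr)\,d\mu(x^{1})\le\frac{c_{0}}{\lambda}\int_{\mathbb{I}^{2}}H(u^{0},x^{1})\,d\mu(u^{0})\,d\mu(x^{1}).
\]
Hence $\lambda\,\mu(\{Tf>\lambda\})\le c_{0}\int_{\mathbb{I}^{2}}H$. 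Next I would write $\int_{\mathbb{I}^{2}}H=\int_{\mathbb{I}}\|T^{1}[f(u^{0},\cdot)]\|_{1}\,d\mu(u^{0})$ by Tonelli and invoke \eqref{H1} in the second variable to obtain $\int_{\mathbb{I}^{2}}H\le c_{1}\int_{\mathbb{I}}\|f(u^{0},\cdot)\|_{H_{1}}\,d\mu(u^{0})$.

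Finally, I would close the estimate with the classical embedding $L\ln L(\mathbb{I})\hookrightarrow H_{1}(\mathbb{I})$, namely $\|g\|_{H_{1}}\lesssim 1+\int_{\mathbb{I}}|g|\ln^{+}|g|$, which expresses the $L^{1}$-integrability of the dyadic maximal function of an $L\ln L$ function. Applying this with $g=f(u^{0},\cdot)$ and integrating in $u^{0}$, together with $\mu(\mathbb{I})=1$, gives
\[
\int_{\mathbb{I}}\|f(u^{0},\cdot)\|_{H_{1}}\,d\mu(u^{0})\lesssim 1+\int_{\mathbb{I}^{2}}|f|\ln^{+}|f|.
\]
Combining the three displayed bounds yields $\lambda\,\mu(\{Tf>\lambda\})\lesssim 1+\int_{\mathbb{I}^{2}}|f|\ln^{+}|f|$ uniformly in $\lambda>0$, and taking the supremum over $\lambda$ gives \eqref{wk-1}. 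Apart from the pointwise domination, all remaining steps are routine applications of Fubini, the two hypotheses, and the standard maximal-function embedding.
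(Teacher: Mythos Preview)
Your argument is correct. The paper does not supply its own proof of Theorem~MSW (it is quoted from M\'oricz--Schipp--Wade \cite{MorSchWade-1992}), but the iterative scheme you describe---dominating $Tf$ pointwise by $\widetilde T^{0}$ applied in the first variable to the one-dimensional maximal function $T^{1}[f(u^{0},\cdot)]$, then using the weak~$(1,1)$ bound in $x^{0}$ via Fubini, and finally controlling the resulting $L_{1}$-integral through the $H_{1}\to L_{1}$ hypothesis together with $L\ln L\hookrightarrow H_{1}$---is exactly the mechanism the paper reproduces in its proof of Theorem~\ref{mt1} (see the chain of inequalities following \eqref{GG}). The only cosmetic difference is that the paper, working under the weaker hypotheses of Theorem~\ref{mt1}, replaces your direct appeal to \eqref{H1} and $L\ln L\hookrightarrow H_{1}$ by the interpolation inequality \eqref{zy} (weak~$(1,1)$ plus $L_\infty$ yields the $L\ln L\to L_1$ bound for the inner maximal operator); under the hypotheses of Theorem~MSW your route is the natural one.
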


According to the theorem, knowing the inequalities (\ref{wk}) and (\ref{H1})
for one-dimensional maximal operators allows for the establishment of a
weak-type inequality for the maximal operator associated with the tensor
product of the provided one-dimensional operators. However, in practical
applications, the establishment of the mentioned inequalities for the
maximal operators is very tricky and difficult job. In the present work, the
main goal is a novel outlook at above mentioned inequalities for the tensor
product of two weighted one-dimensional Walsh-Fourier series. Namely, our
main idea is naturally to replace the above stated inequalities (\ref{wk})
and (\ref{H1}) with weaker uniformly boundedness conditions for the
sequences which imply the weak type estimation for the maximal operator of
the tensor product.

The following is the major move result that achieved this goal.

\begin{theorem}
\label{mt1} Let $\mathbb{T}_{i}$, $(i=0,1)$ be matrices of transformation,
and $\left\{ n_{a}^{i}:\ a\in \mathbb{N}\right\} $ be subsequences of
natural numbers. Let $\mathcal{T}_{n_{a}^{0}}^{\mathbb{T}_{0}}$ and $%
\mathcal{T}_{n_{a}^{1}}^{\mathbb{T}_{1}}$ be the corresponding sequences of
operators on $L_{1}\left( \mathbb{I}\right) $ given by \eqref{weight-mean}.
Assume that for each $i=0,1$ one of the following conditions are satisfied:

\begin{itemize}
\item[(i)] one has
\begin{equation}
\sup\limits_{a\in \mathbb{N}}\Vert \mathcal{T}_{n_{a}^{i}}^{\mathbb{T}%
_{i}}\Vert _{H_{1}\left( \mathbb{I}\right) \rightarrow L_{1}\left( \mathbb{I}%
\right) }<\infty ;  \label{h1}
\end{equation}

\item[(ii)] one has
\begin{equation*}
\sup\limits_{a\in \mathbb{N}}\Vert \mathcal{T}^{\mathbb{T}_i}_{n^i_{a}}\Vert
_{L_{\infty }\left( \mathbb{I}\right) \rightarrow L_{\infty }\left( \mathbb{I%
}\right) }<\infty;
\end{equation*}

\item[(iii)] one has
\begin{equation*}
\sup\limits_{a\in \mathbb{N}}\upsilon (n^i_{a},\mathbb{T}_i)<\infty.
\end{equation*}
\end{itemize}

If%
\begin{equation*}
\mathfrak{T}^{\mathbb{T}_{0},\mathbb{T}_{1}}f:=\sup\limits_{(a,b)\in \mathbb{%
N}^{2}}\left\vert (\mathcal{T}_{n_{a}^{0}}^{\mathbb{T}_{0}}\otimes \mathcal{T%
}_{n_{b}^{1}}^{\mathbb{T}_{1}})f\right\vert
\end{equation*}%
then%
\begin{equation}
\left\Vert \mathfrak{T^{\mathbb{T}_{0},\mathbb{T}_{1}}}f\right\Vert
_{L_{1,\infty }(\mathbb{I}^{2})}\lesssim 1+\int\limits_{\mathbb{I}%
^{2}}\left\vert f\right\vert \ln ^{+}\left\vert f\right\vert \quad \left(
f\in L\ln L\left( \mathbb{I}^{2}\right) \right) .  \label{llogl}
\end{equation}
\end{theorem}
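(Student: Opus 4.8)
The plan is to reduce Theorem~\ref{mt1} to Theorem~MSW by verifying that, for each $i=0,1$, the one-dimensional maximal operators built from the subsequence of kernels $\left\{ V^{\mathbb{T}_i}_{n^i_a}:a\in\mathbb{N}\right\}$ satisfy the two hypotheses \eqref{wk} and \eqref{H1}. The crucial observation is that all three conditions (i), (ii), (iii) appearing in the theorem are equivalent by Theorem~GM (applied to the subsequence $\{n^i_a\}$), so it suffices to work under whichever of them is most convenient; I would fix the $H_1\to L_1$ boundedness in form (i). Thus the task splits into two one-dimensional claims: first, that uniform $H_1(\mathbb{I})\to L_1(\mathbb{I})$ boundedness of $\left\{\mathcal{T}^{\mathbb{T}_i}_{n^i_a}\right\}$ yields $\left\Vert \widetilde T^{i}f\right\Vert_1\lesssim \Vert f\Vert_{H_1}$ for the \emph{majorant} maximal operator $\widetilde T^{i}f=\sup_a\left\vert f\ast\left\vert V^{\mathbb{T}_i}_{n^i_a}\right\vert\right\vert$ (this is the inequality \eqref{H1}, which in Theorem~MSW is demanded only for $T^1$ but which I would establish symmetrically for both indices); and second, that the same uniform boundedness yields the weak $(1,1)$ estimate $\mu\left(\left\{\widetilde T^{i}f>\lambda\right\}\right)\le \frac{c_i}{\lambda}\Vert f\Vert_1$, which is \eqref{wk}. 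Granting both, Theorem~MSW applied with $V^0_n:=V^{\mathbb{T}_0}_{n^0_n}$ and $V^1_m:=V^{\mathbb{T}_1}_{n^1_m}$ delivers exactly \eqref{llogl}.

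The key steps, in order, are as follows. \emph{Step 1:} invoke Theorem~GM to pass freely among (i)--(iii); record the resulting uniform bound $B_i:=\sup_a\Vert\mathcal{T}^{\mathbb{T}_i}_{n^i_a}\Vert_{H_1\to L_1}<\infty$ and also the $L_\infty\to L_\infty$ bound from (ii). \emph{Step 2 (the $L_1$ estimate for $\widetilde T^i$):} here one must control the maximal function of the \emph{absolute values} of the kernels. The standard device is to estimate the $L_1$-norm of each $\left\vert V^{\mathbb{T}_i}_{n^i_a}\right\vert$ by the quantity $\upsilon(n^i_a,\mathbb{T}_i)$ — indeed the Lebesgue constants $\left\Vert V^{\mathbb{T}}_n\right\Vert_1$ are, up to absolute constants, comparable to $\upsilon(n,\mathbb{T})$ (this comparison is implicit in the equivalence of (ii) and (iii) in Theorem~GM, since $\Vert V^{\mathbb{T}}_n\Vert_1=\Vert\mathcal{T}^{\mathbb{T}}_n\Vert_{L_\infty\to L_\infty}$), and it is finite uniformly in $a$ by Step~1. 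Combined with the dyadic martingale/atomic structure of $H_1(\mathbb{I})$ — it suffices to test \eqref{H1} on dyadic atoms $\operatorname{supp}a\subseteq I$ with $\Vert a\Vert_\infty\le \mu(I)^{-1}$ and $\int a=0$, then split $\widetilde T^i a$ over $I$ and over $\mathbb{I}\setminus I$, using the vanishing moment on the local part and the uniform $L_1$-bound on the kernels for the global part — this yields $\left\Vert \widetilde T^i a\right\Vert_1\lesssim B_i$ for atoms and hence \eqref{H1}. \emph{Step 3 (the weak $(1,1)$ estimate for $\widetilde T^i$):} apply a Calder\'on--Zygmund decomposition of $f\in L_1(\mathbb{I})$ at height $\lambda$: write $f=g+b$ with $\Vert g\Vert_\infty\lesssim\lambda$, $\Vert g\Vert_1\le\Vert f\Vert_1$, and $b=\sum_j b_j$ where each $b_j$ is supported on a dyadic interval $I_j$ with $\int b_j=0$, $\sum_j\mu(I_j)\lesssim\lambda^{-1}\Vert f\Vert_1$, and $\sum_j\Vert b_j\Vert_1\lesssim\Vert f\Vert_1$. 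For the good part use the $L_\infty\to L_\infty$ bound (or the trivial $L_1\to L_1$ bound for $\widetilde T^i$, which follows from uniform boundedness of the $L_1$-norms of the kernels) together with Chebyshev; for the bad part, enlarge each $I_j$ to a controlled dyadic neighbourhood (outside which $b_j\ast\left\vert V^{\mathbb{T}_i}_{n^i_a}\right\vert$ is small uniformly in $a$, again thanks to the uniform $L_1$-kernel bound and the dyadic structure — on the Walsh group "far from the support" is a clean dyadic notion), and estimate the exceptional set directly. This reproduces the familiar proof that a uniformly $L_1$-bounded family of convolution kernels on the dyadic group has a weak-$(1,1)$ maximal operator. \emph{Step 4:} feed the outputs of Steps 2 and 3 into Theorem~MSW.

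The main obstacle I anticipate is Step~2: passing from uniform boundedness of the \emph{signed} operators $\mathcal{T}^{\mathbb{T}_i}_{n^i_a}$ (equivalently, uniform smallness of $\upsilon(n^i_a,\mathbb{T}_i)$, or of the Lebesgue constants $\Vert V^{\mathbb{T}_i}_{n^i_a}\Vert_1$) to a bound on the maximal function $\widetilde T^i$ of the \emph{absolute values} of the kernels in $H_1\to L_1$ norm. One cannot simply cite Theorem~GM for $\widetilde T^i$, since that theorem concerns the operators $\mathcal{T}^{\mathbb{T}}_n$ themselves, not their absolute-value majorants. The resolution should exploit the specific structure of the Walsh--Dirichlet kernels: $D_{2^k}$ is (up to normalization) the indicator of a dyadic interval, so $V^{\mathbb{T}}_n$ — being a $\tau$-weighted combination of Walsh--Dirichlet kernels — inherits a piecewise-constant, nested dyadic structure whose variation is governed precisely by $\upsilon(n,\mathbb{T})$; this is exactly the computation underlying the estimate $\Vert V^{\mathbb{T}}_n\Vert_1\asymp\upsilon(n,\mathbb{T})$ in the one-dimensional theory, and the same decomposition bounds $\sup_a\left\vert V^{\mathbb{T}}_{n_a}\right\vert$ by a sum over dyadic scales with coefficients controlled uniformly in $a$. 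Once this kernel analysis is in hand, the atomic argument for \eqref{H1} and the Calder\'on--Zygmund argument for \eqref{wk} are routine, and the theorem follows.

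Regarding the additive constant $1$ on the right-hand side of \eqref{llogl}: it is inherited verbatim from \eqref{wk-1} in Theorem~MSW and requires no separate treatment.
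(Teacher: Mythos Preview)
Your plan is to feed the problem into Theorem~MSW, which requires verifying the weak-type hypothesis for~$\widetilde T^{0}$ and the $H_1\to L_1$ maximal inequality $\|T^{1}f\|_1\le c_1\|f\|_{H_1}$ for~$T^{1}$. The gap is in Step~2, where you claim to derive the latter from~(i)/(ii)/(iii) by an atomic argument. Your sketch handles the integral of $\widetilde T^{i}a$ over $\mathbb{I}\setminus I$ by citing ``the uniform $L_1$-bound on the kernels'', but that bound controls only $\int_{\mathbb{I}}\bigl|a*V^{\mathbb{T}_i}_{n^{i}_{a}}\bigr|$ for each fixed index, not $\int_{\mathbb{I}\setminus I}\sup_{a}\bigl|a*V^{\mathbb{T}_i}_{n^{i}_{a}}\bigr|$; different indices may deposit mass in different regions, and the supremum need not be integrable off~$I$ from uniform $L_1$ kernel bounds alone. (The identical defect appears in the Calder\'on--Zygmund argument of your Step~3, though there the \emph{conclusion}---weak~$(1,1)$ for~$\widetilde T^{0}$---is true and is the content of Proposition~\ref{MMo}, proved not by Calder\'on--Zygmund but by a pointwise domination $\sup_{b}\widetilde{\mathcal T}^{\mathbb{T}}_{n_b}|f|\lesssim\sup_{k}\bigl(|f|*|K_k|\bigr)+E^{*}(|f|)$ coming from the explicit splitting $V_n=V_{n,1}+V_{n,2}$.) The $H_1\to L_1$ bound for the \emph{maximal} operator $\sup_{b}\bigl|\mathcal T^{\mathbb{T}_1}_{n^{1}_{b}}\bigr|$, by contrast, is nowhere established in the paper; the remark immediately following the theorem in fact treats that maximal inequality as an \emph{additional} hypothesis beyond~(i)/(ii)/(iii). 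The paper's declared point is precisely that~(i)/(ii)/(iii) are \emph{weaker} inputs than the pair of hypotheses in Theorem~MSW; your plan, if it worked, would collapse the theorem to a corollary of MSW plus Theorem~GM, which is not what the authors are claiming.

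The paper's route avoids both the $H_1\to L_1$ maximal bound and Theorem~MSW. It establishes only weak-$(1,1)$ and $L_\infty\to L_\infty$ bounds for each one-dimensional maximal operator (the signed one $\sup_{b}\bigl|\mathcal T^{\mathbb{T}_1}_{n^{1}_{b}}\bigr|$ is quoted from~\cite{GoginavaQM2022}; the absolute-value one $\sup_{a}\widetilde{\mathcal T}^{\mathbb{T}_0}_{n^{0}_{a}}$ is Proposition~\ref{MMo}) and then iterates directly: dominate the two-parameter maximal function by $\sup_{a}\widetilde{\mathcal T}^{\mathbb{T}_0}_{n^{0}_{a}}\bigl(\sup_{b}\bigl|\mathcal T^{\mathbb{T}_1}_{n^{1}_{b}}f\bigr|\bigr)$; apply Zygmund's inequality (weak~$(1,1)$ plus $L_\infty$ gives $L\log L\to L_1$) in the inner variable to obtain $\bigl\|\sup_{b}\bigl|\mathcal T^{\mathbb{T}_1}_{n^{1}_{b}}f(x^{0},\cdot)\bigr|\bigr\|_{1}\lesssim 1+\int|f(x^{0},\cdot)|\ln^{+}|f(x^{0},\cdot)|$; then use the weak-$(1,1)$ bound of Proposition~\ref{MMo} in the outer variable together with a Fubini argument on the weak norm. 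No $H_1\to L_1$ maximal bound enters.
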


\begin{remark}
Let $f\in L_{1}({\mathbb{I}}^{2})$, the hybrid maximal function is
introduced by
\begin{equation*}
f^{\natural }(x,y):=\sup_{n\in {\mathbb{N}}}\frac{1}{|I_{n}(x)|}\left\vert
\int_{I_{n}(x)}f(t,y)d\mu \left( t\right) \right\vert .
\end{equation*}%
Define the Hardy hybrid space by 
\begin{equation*}
H_{\natural }({\mathbb{I}}^{2})=\left\{f\in L_{1}({\mathbb{I}}^{2}): \ 
 \Vert f\Vert _{H_{\natural}}:=\left\Vert f^{\natural }\right\Vert _{L_1(\mathbb{I}^2)}<\infty\right\} .
\end{equation*}%
It is well-known that  $L\ln L({\mathbb{I}}^{2})\subset H_{\natural }({\mathbb{I}}^{2})$%
. Moreover, $f\in L\ln L({\mathbb{I}}^{2})$ if and only if $|f|\in
H_{\natural }^{1}({\mathbb{I}}^{2})$. We are concerned if Theorem \ref{mt1} will hold true if the inequality (\ref{llogl}) is replaced out with a weaker inequality. Namely, in the right side of the inequality (\ref{llogl}) the norm $\left\Vert \cdot \right\Vert _{L\log L\left( \mathbb{I}^{2}\right) }$  is replaced by the norm $\left\Vert \cdot \right\Vert _{H^{\#}\left( \mathbb{I}^{2}\right) }$. However, if we screen  the proof of Theorem \ref{mt1}, then the validity of the following inequality
\begin{equation*}
\left\Vert \mathfrak{T^{\mathbb{T}_{0},\mathbb{T}_{1}}}f\right\Vert
_{L_{1,\infty }(\mathbb{I}^{2})}\lesssim \left\Vert f\right\Vert
_{H_{\natural }({\mathbb{I}}^{2})},\quad  f\in H_{\natural }({\mathbb{I}%
}^{2})
\end{equation*}%
can be proven if the condition (\ref{h1}) holds for one sequence, say $\mathcal{T%
}_{n_{a}^{1}}^{\mathbb{T}_{1}}$, and the second one satisfies the
following maximal inequality%
\begin{equation*}
\left\Vert \sup\limits_{a\in \mathbb{N}}\left\vert \mathcal{T}_{n_{a}^{2}}^{%
\mathbb{T}_{2}}\right\vert \right\Vert _{H_{1}\left( \mathbb{I}\right)
\rightarrow L_{1}\left( \mathbb{I}\right) }<\infty .
\end{equation*}
\end{remark}

\subsection{Lebesgue points and their application in the pointwise
convergence of sequences of operators}

It is usually assumed that convergence almost always implies convergence,
with the exception of a set of measure 0. However, because a set of measure
zero might be complex, convergence at one point cannot be deduced from
convergence practically everywhere. As a result, it is useful to to
establish a sufficient condition at a specific point to check the sequence's
convergence, and then count the number of such locations. In this regards,
Lebesgue explored the arithmetic means of partial sums of trigonometric
Fourier series and established the idea of a point that bears his name.

Given $f\in L_{1}\left( \mathbb{I}\right) $, a point $x\in \mathbb{I}$ is
\textit{a Lebesgue point} of $f$ if \cite[pp 41-42]{TorchBook}
\begin{equation}
\lim_{\varepsilon \rightarrow 0+}\frac{1}{\varepsilon }\int\limits_{\left[
0,\varepsilon \right] }\left\vert f\left( x+t\right) -f\left( x\right)
\right\vert dt=0.  \label{LP}
\end{equation}%
It is known that for every integrable function almost every point is a
Lebesgue point. Furthermore, Lebesgue's theorem states that the arithmetic
means of partial sums of trigonometric Fourier series is convergent at
Lebesgue points. For the Walsh system, introducing Lebesgue points in the
form (\ref{LP}) is inaccurate because the condition (\ref{LP}) does not
guarantee the convergence of arithmetic means of partial sums at a point. To
be more precise, let us illustrate it in the next example.

\begin{example}
\label{ex1} Take a subsequence of natural numbers $\left\{ n_{k}:k\in
\mathbb{N}\right\} $ enjoying the following conditions:%
\begin{equation}
n_{k}>3n_{k-1}  \label{n1}
\end{equation}%
and%
\begin{equation}
n_{k}>2^{2k}.  \label{n2}
\end{equation}%
Take a sequence of functions $\left\{ f_{k}:k\in \mathbb{N}\right\} $
defined by
\begin{equation}
f_{k}\left( x\right) :=\sum\limits_{a=n_{k-1}+1}^{n_{k}}2^{n_{k}-a}\mathbf{1}%
_{\left[ 2^{-a},2^{-a}+2^{-n_{k}}\right) },  \label{fk}
\end{equation}%
where $\mathbf{1}_{E}$ is the characteristic function of the set $E$.

Set%
\begin{equation*}
f\left( x\right) :=\sum\limits_{k=1}^{\infty }\frac{f_{k}\left( x\right) }{%
2^{k}},\ \ f\left( 0\right) =0.
\end{equation*}%
It is easy to see that $f\in L_{1}\left( \mathbb{I}\right) $. Now, let us
show that the point $x=0$ is a Lebesgue point. Indeed, for every $%
\varepsilon >0$, there exists $k\in \mathbb{N}$ such that $2^{n_{k-1}}\leq
\frac{1}{\varepsilon }<2^{n_{k}}$. Then, one has
\begin{eqnarray}
\frac{1}{\varepsilon }\int\limits_{\left[ 0,\varepsilon \right] }\left\vert
f\left( t\right) -f\left( 0\right) \right\vert dt &=&\frac{1}{\varepsilon }%
\int\limits_{\left[ 0,\varepsilon \right] }f\left( t\right) d\mu \left(
t\right)   \label{LP2} \\
&=&\frac{1}{\varepsilon }\sum\limits_{s=k}^{\infty }\frac{1}{2^{s}}%
\int\limits_{\left[ 0,\varepsilon \right] }f_{s}\left( t\right) d\mu \left(
t\right)   \notag \\
&=&\frac{1}{\varepsilon }\frac{1}{2^{k}}\int\limits_{\left[ 0,\varepsilon %
\right] }f_{k}\left( t\right) d\mu \left( t\right) +\frac{1}{\varepsilon }%
\sum\limits_{s=k+1}^{\infty }\frac{1}{2^{s}}\int\limits_{\left[
0,\varepsilon \right] }f_{s}\left( t\right) d\mu \left( t\right) .  \notag
\end{eqnarray}%
Assume that $s>k$. Then, by \eqref{fk} we have
\begin{equation}
\int\limits_{\left[ 0,\varepsilon \right] }f_{s}\left( t\right) d\mu \left(
t\right) =\frac{1}{2^{n_{s}}}+\cdots +\frac{1}{2^{n_{s-1}+1}}<\frac{1}{%
2^{n_{k}}}.  \label{LP3}
\end{equation}%
For small $\varepsilon >0$ there exists $l\in \left\{ 0,1,\dots
,n_{k}-n_{k-1}\right\} $ such that
\begin{equation*}
2^{n_{k-1}+l}\leq \frac{1}{\varepsilon }<2^{n_{k-1}+l+1}.
\end{equation*}%
Hence,
\begin{eqnarray}
\frac{1}{\varepsilon }\int\limits_{\left[ 0,\varepsilon \right] }f_{k}\left(
t\right) d\mu \left( t\right)  &\leq &2^{n_{k-1}+l+1}\int\limits_{\left[
0,2^{-n_{k-1}-l}\right] }f_{k}\left( t\right) d\mu \left( t\right)
\label{LP4} \\
&\leq
&2^{n_{k-1}+l+1}\sum\limits_{a=n_{k-1}+l}^{n_{k}}2^{n_{k}-a}2^{-n_{k}}\leq
c<\infty   \notag
\end{eqnarray}%
Now, plugging (\ref{LP3}) and (\ref{LP4}) into (\ref{LP2}) yields
\begin{equation*}
\frac{1}{\varepsilon }\int\limits_{\left[ 0,\varepsilon \right] }\left\vert
f\left( t\right) -f\left( 0\right) \right\vert d\mu \left( t\right) \leq
\frac{c}{2^{k}}\rightarrow 0\text{ as }\varepsilon \rightarrow 0,
\end{equation*}%
which means $x=0$ is a Lebesgue point for $f$.

Now, let us demonstrate the divergence of $\sigma _{2^{n_{k}}}\left(
f,0\right) $. Indeed, by \cite{KK, p.46}, one has
\begin{equation*}
K_{2^{n}}\left( x\right) =\frac{1}{2}\left( \left( 2^{n}+1\right) \mathbf{1}%
_{I_{n}\left( x\right) }+\sum\limits_{j=0}^{n-1}2^{j}\mathbf{1}_{I_{n}\left(
x\dotplus 2^{-j-1}\right) }\right) .
\end{equation*}%
Due to (\ref{n1}) and (\ref{n2}) we obtain
\begin{eqnarray*}
\sigma _{2^{n_{k}}}\left( f,0\right)  &\geq &\frac{1}{2^{k}}%
\sum\limits_{j=n_{k-1}}^{n_{k}-1}2^{j-1}\int\limits_{I_{n_{k}}\left(
2^{-j-1}\right) }f_{k}\left( t\right) d\mu \left( t\right)  \\
&=&\frac{1}{2^{k}}\sum%
\limits_{j=n_{k-1}}^{n_{k}-1}2^{j-1}2^{n_{k}-j}2^{-n_{k}} \\
&=&\frac{n_{k}-n_{k-1}}{2^{k+1}}\geq c2^{k}\rightarrow \infty \text{ as }%
k\rightarrow \infty .
\end{eqnarray*}
\end{example}

\begin{remark}
Let us consider the Fejér matrix of transformation $\mathbb{F}$. In such a
situation, for the corresponding kernel one has
\begin{equation}
\sup\limits_{n\in \mathbb{N}}\left\Vert K_{n}\right\Vert _{1}<\infty
\label{kn}
\end{equation}%
for both the trigonometric and Walsh systems. We point out that, in the
trigonometric system, the function $\left\vert K_{n}\right\vert $ may be
estimated by an integrable, decreasing, non-negative function which yields
that the operator $f\ast K_{n}$ can be evaluated from above by the maximum
Hardy-Littlewood function (see \cite[pp. 41-43]{TorchBook}). However,
despite the inequality (\ref{kn}), the Walsh system lacks a non-negative,
decreasing, and integrable majorant for $\left\vert K_{n}\right\vert $. As a
result, the above-mentioned Lebesgue point does not apply in such a
situation.
\end{remark}

Given the circumstances described above, it seemed obvious to define an
analog of Walsh-Lebesgue points. In this direction, Weisz \cite{WeiszWLP1}
first introduced the idea of Walsh-Lebesgue points. Namely, an element $x\in
\mathbb{I}$ is a Walsh-Lebesgue point of an integrable function $f\in
L_{1}\left( \mathbb{I}\right) $ if%
\begin{equation*}
\lim_{n\rightarrow \infty }W_{n}f\left( x\right) =0,
\end{equation*}%
where%
\begin{equation*}
W_{n}f\left( x\right) =\sum\limits_{k=0}^{n}2^{k}\int\limits_{I_{n}\left(
x\dotplus 2^{-k-1}\right) }\left\vert f\left( t\right) -f\left( x\right)
\right\vert d\mu \left( t\right) ,
\end{equation*}%
here, as before, $\dotplus $ stands for the dyadic sum and $I_{n}\left(
x\right) $ denotes the dyadic interval of the $n$th order, which contains
the point $x$.

Furthermore, in the mentioned paper \cite{ WeiszWLP1} it has been
established that Walsh-Fej\'er means converge at all Walsh-Lebesgue points,
which is a generalisation of Fine's \cite{Fine1955cesaro} result.

The works \cite{GogGogLebPoints,WeiszWLP3} established the concept of
Welsh-Lebesgue points for two-variable functions. Namely, given $f\in
L_{1}\left( \mathbb{I}^{2}\right) $, let us denote

\small

\begin{eqnarray}
&&W_{n^{0},n^{1}}f\left( x^{0},x^{1}\right)
:=\sum\limits_{i^{0}=0}^{n^{0}}\sum\limits_{i^{1}=0}^{n^{1}}2^{i^{0}+i^{1}}%
\int\limits_{I_{n^{0}}\left( x^{0}\dotplus 2^{-i^{0}-1}\right) \times
I_{n^{1}}\left( x^{1}\dotplus 2^{-i^{1}-1}\right) }\Delta(f)(s^{0},s^{1},x^{0},x^{1})d\boldsymbol{\mu }%
\left( s^{0},s^{1}\right) ,  \label{Wnm} \\[2mm]
&&H_{n^{1}}^{\left( 1\right) }f\left( x^{0},x^{1}\right)
:=\sum\limits_{i^{1}=0}^{n^{1}}2^{i^{1}}\int\limits_{\mathbb{I}\times
I_{n^{1}}\left( x^{1}\dotplus 2^{-i^{1}-1}\right) }\Delta(f)(s^{0},s^{1},x^{0},x^{1}) d\boldsymbol{\mu }%
\left( s^{0},s^{1}\right) ,  \label{H2} \\[2mm]
&&H_{n^{0}}^{\left( 0\right) }f\left( x^{0},x^{1}\right)
:=\sum\limits_{i^{0}=0}^{n^{0}}2^{i^{0}}\int\limits_{I_{n^{0}}\left(
x\dotplus 2^{-i^{0}-1}\right) \times \mathbb{I}}\Delta(f)(s^{0},s^{1},x^{0},x^{1}) d\boldsymbol{\mu }%
\left( s^{0},s^{1}\right) .  \label{H1}
\end{eqnarray}%
\normalsize 
where
$$
\Delta(f)(s^{0},s^{1},x^{0},x^{1})=|f(s^{0},s^{1})-f(x^{0},x^{1})|
$$
and $\boldsymbol{\mu }$ stands for the two-dimensional Lebesgue measure defined by $\boldsymbol{\mu }:=\mu\otimes \mu$.

In \cite{GogGogLebPoints} the importance of all three
conditions has been stressed by providing an example of an integrable
function for which one of the above conditions fails in a single point, then
the corresponding double Walsh-Fejer means will not converge at that point.

\begin{definition} Let $f\in L_{1}\left( \mathbb{I}^{2}\right) $. A point $%
\left( x^{0},x^{1}\right) \in \mathbb{I}^{2}\,$ is a two-dimensional
Walsh-Lebesgue point of $f$, if \begin{equation} \lim\limits_{\min \left\{
n^{0},n^{1}\right\} \rightarrow \infty }W_{n^{0},n^{1}}f\left(
x^{0},x^{1}\right) =0, \label{wl1} \end{equation}\begin{equation}
\sup\limits_{n}H_{n}^{\left( 1\right) }f\left( x^{0},x^{1}\right) <\infty
\,\,\,\,\, \label{wl2} \end{equation}and \begin{equation} \sup%
\limits_{m}H_{m}^{\left( 2\right) }f\left( x^{0},x^{1}\right) <\infty \,.
\label{wl3} \end{equation} \end{definition} The results of \cite{%
GogGogLebPoints} yield that for every $f\in L\ln L\left( \mathbb{I}%
^{2}\right) $, almost all points from its domain form two-dimensional
Walsh-Lebesgue points. According to Theorem \ref{mt1}, the sequence $(%
\mathcal{T}_{n_{a}^{0}}^{\mathbb{T}_{0}}\otimes \mathcal{T}_{n_{b}^{1}}^{%
\mathbb{T}_{1}})f$ almost everywhere convergence for $f\in L\ln L\left(
\mathbb{I}^{2}\right) $. The second main result of this paper reveals that
convergent set of that sequence contains two dimensional Walsh-Lebesgue
points of $f$. Here is formulation of the second main result.

\begin{theorem}
\label{mt2} Let us assume that the statements of Theorem \ref{mt1} are
fulfilled. Moreover, for the sequences $\left\{ n_{a}^{0}:a\in \mathbb{P}%
\right\} $ and $\left\{ n_{b}^{1}:b\in \mathbb{P}\right\} $ one has $%
t_{0,n_{a}^{i}}^{i}=o\left( 1\right) $ as $n_{a}^{i}\rightarrow \infty
\left( i=0,1\right) $. Then for each $f\in L\ln L\left( \mathbb{I}%
^{2}\right) $ the sequence $\{(\mathcal{T}_{n_{a}^{0}}^{\mathbb{T}%
_{0}}\otimes \mathcal{T}_{n_{b}^{1}}^{\mathbb{T}_{1}})f\}$ converges to $f$
at every two-dimensional Walsh-Lebesgue point as $\min \left\{
n_{a}^{0},n_{b}^{1}\right\} \rightarrow \infty $. \end{theorem}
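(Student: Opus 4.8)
The plan is to reduce the statement to a pointwise majorant for the one–dimensional kernels $V^{\mathbb{T}_{i}}_{n}$ and then to match, term by term, the resulting double integral to the three quantities $W$, $H^{(0)}$, $H^{(1)}$ defining a two–dimensional Walsh–Lebesgue point. Since each matrix $\mathbb{T}_{i}$ satisfies condition (c), every operator $\mathcal{T}^{\mathbb{T}_{i}}_{n}$ reproduces constants, so with $u^{i}=x^{i}\dotplus s^{i}$ we may write
\begin{equation*}
\bigl(\mathcal{T}^{\mathbb{T}_{0}}_{n_{a}^{0}}\otimes\mathcal{T}^{\mathbb{T}_{1}}_{n_{b}^{1}}\bigr)f(x^{0},x^{1})-f(x^{0},x^{1})
=\int_{\mathbb{I}^{2}}\bigl(f(s^{0},s^{1})-f(x^{0},x^{1})\bigr)\,V^{\mathbb{T}_{0}}_{n_{a}^{0}}(x^{0}\dotplus s^{0})\,V^{\mathbb{T}_{1}}_{n_{b}^{1}}(x^{1}\dotplus s^{1})\,d\boldsymbol{\mu},
\end{equation*}
so it suffices to show that $\int_{\mathbb{I}^{2}}\Delta(f)(s^{0},s^{1},x^{0},x^{1})\,\bigl|V^{\mathbb{T}_{0}}_{n_{a}^{0}}(x^{0}\dotplus s^{0})\bigr|\,\bigl|V^{\mathbb{T}_{1}}_{n_{b}^{1}}(x^{1}\dotplus s^{1})\bigr|\,d\boldsymbol{\mu}\to 0$ as $\min\{n_{a}^{0},n_{b}^{1}\}\to\infty$.

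\emph{Pointwise kernel estimate (the core).} I would establish, for $n$ ranging over either subsequence and $|n|$ its dyadic order, a bound of the shape
\begin{equation*}
\bigl|V^{\mathbb{T}}_{n}(u)\bigr|\ \lesssim\ 2^{|n|}\mathbf{1}_{I_{|n|}(u)}+\sum_{k=0}^{|n|}\alpha_{k}(n)\,2^{k}\,\mathbf{1}_{I_{|n|}(u\dotplus 2^{-k-1})}+t_{0,n}\,\Psi_{n}(u),
\end{equation*}
where $0\le\alpha_{k}(n)\le c$ with $\sum_{k=0}^{|n|}\alpha_{k}(n)\,2^{k-|n|}\lesssim 1+\upsilon(n,\mathbb{T})$, and $\Psi_{n}$ is a Dirichlet–type remainder whose contribution, after integration against $|g(x\dotplus\cdot)-g(x)|$ in one variable, is controlled uniformly in $n$ by the shifted–dyadic sums appearing in the definitions of $H^{(0)}$ and $H^{(1)}$. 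Such an estimate follows from the summation–by–parts form of \eqref{weight-mean}–\eqref{V}, $V^{\mathbb{T}}_{n}=\sum_{k=1}^{n}t_{n-k,n}D_{k}$, rewritten through the Paley identity $D_{k}=w_{k}\sum_{l}\varepsilon_{l}(k)r_{l}D_{2^{l}}$ and reorganised by dyadic blocks, so that the coefficients of the ``spread–out'' part are precisely the ones entering $\upsilon(n,\mathbb{T})$, while the index $k=n$ produces the term weighted by $t_{0,n}$. Since the hypotheses of Theorem~\ref{mt1} hold, Theorem~GM gives $\sup_{a}\upsilon(n_{a}^{i},\mathbb{T}_{i})<\infty$ for $i=0,1$, whence the $\alpha$–sums are uniformly bounded; and the additional assumption gives $t_{0,n_{a}^{i}}^{i}=o(1)$.

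\emph{Tensoring and matching.} Multiplying the two one–dimensional majorants and integrating against $\Delta(f)$ splits the integral into four groups. The group with no $\Psi$–factor is dominated, up to an absolute constant, by $W_{|n_{a}^{0}|,|n_{b}^{1}|}f(x^{0},x^{1})$ — the local blocks $2^{|n|}\mathbf{1}_{I_{|n|}(\cdot)}$ reproducing one of the summands of \eqref{Wnm} (the one with $i^{0}=|n_{a}^{0}|,\ i^{1}=|n_{b}^{1}|$), since $I_{|n|}(x\dotplus 2^{-|n|-1})=I_{|n|}(x)$ — hence it tends to $0$ by \eqref{wl1}. In a ``mixed'' group one variable carries the factor $t_{0,n_{a}^{0}}^{0}\Psi_{n_{a}^{0}}$ (resp. $t_{0,n_{b}^{1}}^{1}\Psi_{n_{b}^{1}}$); integrating the other (spread–out/local) factor against $\Delta(f)$ in its variable and then pairing with $\Psi$ reproduces, up to an absolute constant, a quantity dominated by $H^{(0)}_{|n_{a}^{0}|}f(x^{0},x^{1})$ (resp. $H^{(1)}_{|n_{b}^{1}|}f(x^{0},x^{1})$), which is finite by \eqref{wl2} (resp. \eqref{wl3}); the prefactor $t_{0,n_{a}^{i}}^{i}=o(1)$ then makes the group vanish. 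The group carrying both $\Psi$–factors has prefactor $t_{0,n_{a}^{0}}^{0}t_{0,n_{b}^{1}}^{1}=o(1)$ times a quantity again bounded by the $H$–type expressions, finite at a Walsh–Lebesgue point. Adding the four estimates yields $\bigl(\mathcal{T}^{\mathbb{T}_{0}}_{n_{a}^{0}}\otimes\mathcal{T}^{\mathbb{T}_{1}}_{n_{b}^{1}}\bigr)f(x^{0},x^{1})\to f(x^{0},x^{1})$ as $\min\{n_{a}^{0},n_{b}^{1}\}\to\infty$.

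\emph{Main obstacle.} The delicate point is the pointwise kernel estimate of the second step: one must split $|V^{\mathbb{T}}_{n}|$ so that the ``non–small'' part is genuinely of the shifted–dyadic shape governed by $\upsilon(n,\mathbb{T})$ (uniformly bounded under our hypotheses, by Theorem~GM), while the excess is isolated with the weight $t_{0,n}$ and, after integration and tensorisation, is absorbed by the boundedness conditions \eqref{wl2}–\eqref{wl3} rather than by \eqref{wl1}. Everything after this decomposition — the tensorisation and the term–by–term identification with $W$, $H^{(0)}$, $H^{(1)}$, together with the use of $t_{0,n_{a}^{i}}^{i}=o(1)$ — is routine but lengthy bookkeeping.
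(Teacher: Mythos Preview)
Your reduction in the first paragraph is correct, and the overall architecture --- majorise $|V^{\mathbb T}_n|$ pointwise, tensor, and match against $W$, $H^{(0)}$, $H^{(1)}$ --- is the right one. The gap is in the kernel estimate itself: the decomposition
\[
|V^{\mathbb T}_n(u)|\ \lesssim\ 2^{|n|}\mathbf 1_{I_{|n|}}(u)+\sum_{k=0}^{|n|}\alpha_k(n)\,2^{k}\,\mathbf 1_{I_{|n|}(u\dotplus 2^{-k-1})}+t_{0,n}\Psi_n(u)
\]
does not hold with the properties you assign to it. The shifted intervals $I_{|n|}(u\dotplus 2^{-k-1})$, $0\le k\le |n|$, together cover a set of measure $O(|n|\,2^{-|n|})$, whereas $|V^{\mathbb T}_n|$ is supported on all of $\mathbb I$ (already $t_{n-1,n}D_1\equiv t_{n-1,n}$ contributes everywhere). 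So everything off that thin set has to be absorbed by $t_{0,n}\Psi_n$; but there is no reason this ``rest'' carries the prefactor $t_{0,n}$ --- isolating the single summand $t_{0,n}D_n$ (``the index $k=n$'') does nothing for the other $t_{n-k,n}D_k$. Concretely, after the paper's decomposition $V_n=V_{n,1}+V_{n,2}$ one gets, for $V_{n,1}$, a combination of $D_{2^{s}}=2^{s}\mathbf 1_{I_s}$ with $s\le |n|$, and for $V_{n,2}$, a combination of Fej\'er kernels $|K_l|$; neither family is of the form $2^{k}\mathbf 1_{I_{|n|}(\,\cdot\,\dotplus 2^{-k-1})}$. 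Consequently the convolution of $|f_{x^0,x^1}|$ with the tensor product is bounded not by the single quantity $W_{|n^0_a|,|n^1_b|}$, but by weighted sums of $W_{s^0,s^1}$ over all scales $s^0\le |n^0_a|$, $s^1\le |n^1_b|$ (this is exactly \eqref{wl-1} and the estimate (zz)). For small $s^0$ or $s^1$ these $W_{s^0,s^1}$ need not be small, so your ``main group'' does not go to zero by \eqref{wl1} alone.

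This is also where the hypothesis $t^i_{0,n^i_a}=o(1)$ really enters, and it is not as a multiplicative factor on a remainder kernel. In the paper's argument it is used in two different ways: (a) via $\tau_{2^{s},n}\le (2^{s}+1)t_{0,n}$ it makes the \emph{low-scale} contributions ($s$ below a threshold $\omega(t_{0,n})\asymp \frac14\log_2(1/t_{0,n})$) small after combining with \eqref{wl-4} or the one--sided bounds \eqref{wl-2}--\eqref{wl-3}; (b) it supplies condition (b) of Lemma~\ref{Unm}, which is what drives the $V_{n,2}\otimes V_{n,2}$ (Fej\'er--Fej\'er) piece to zero. Condition (iii) of Theorem~GM is used, as you anticipated, to keep the scale--sums uniformly bounded; but it cooperates with a low/high splitting of the $s$--indices rather than with a single $W_{|n|,|m|}$ bound. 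If you rewrite your plan with the majorant $|V^{\mathbb T}_n|\lesssim \sum_{s}\beta_s(n)\,D_{2^s}+(\text{Fej\'er part})$, split the resulting double sums in $s^0,s^1$ at the thresholds $\omega(t^0_{0,n^0_a})$, $\omega(t^1_{0,n^1_b})$, and treat the Fej\'er tensor via a Toeplitz--type lemma (Lemma~\ref{Unm}), you recover the paper's proof.
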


\section{Proof of Theorem \protect\ref{mt1}}

 The present section is devoted to the
proof of Theorem \ref{mt1}. Before to establish it, we are going to prove an
auxiliary fact which has independent interest. Let, as before, $\mathbb{T}%
:=\left( t_{k,n}:k,n\in \mathbb{N}\right) $ be a matrix of transformation.
Consider the sequence of kernels $V_{n}^{\mathbb{T}}$ given by \eqref{V}.
Let us define \begin{equation} \widetilde{\mathcal{T}}_{n}^{\mathbb{T}%
}f:=f\ast \left\vert V_{n}^{\mathbb{T}}\right\vert \ \ f\in L_{1}\left(
\mathbb{I}\right) . \label{Tn}
\end{equation}

\begin{prop} \label{MMo} Let $%
\mathbb{T}$ be a matrix transformation, and $\left\{ n_{b}:b\in \mathbb{N}%
\right\} $ be a subsequence of natural numbers. Assume that one of the
conditions (i)-(iii) of Theorem GM holds. Then \begin{equation} \left\Vert
\sup\limits_{b\in \mathbb{N}}\left\vert \widetilde{\mathcal{T}}_{n_{b}}^{%
\mathbb{T}}f\right\vert \right\Vert _{L_{1,\infty }(\mathbb{I})}\lesssim
\left\Vert f\right\Vert _{1}\text{ \ }\left( f\in L_{1}\left( \mathbb{I}%
\right) \right) \label{wk11} \end{equation}and\begin{equation} \left\Vert
\sup\limits_{b\in \mathbb{N}}\left\vert \widetilde{\mathcal{T}}_{n_{b}}^{%
\mathbb{T}}f\right\vert \right\Vert _{\infty }\lesssim \left\Vert
f\right\Vert _{\infty }\text{ \ }\left( f\in L_{\infty }\left( \mathbb{I}%
\right) \right) . \label{inf} \end{equation}
\end{prop}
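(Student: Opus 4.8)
The plan is to treat the two inequalities separately, \eqref{inf} being essentially immediate while \eqref{wk11} carries all the weight. \textbf{The bound \eqref{inf}.} Each $V_{n}^{\mathbb{T}}$ is a finite linear combination of Walsh--Dirichlet kernels, hence a bounded step function, so $|V_{n}^{\mathbb{T}}|\in L_{1}(\mathbb{I})$ and $\widetilde{\mathcal{T}}_{n}^{\mathbb{T}}$ is well defined on $L_{\infty}(\mathbb{I})$. Since $\|f\ast\varphi\|_{\infty}\le\|f\|_{\infty}\|\varphi\|_{1}$ for convolution on the dyadic group,
\[
\Big\|\sup_{b\in\mathbb{N}}\big|\widetilde{\mathcal{T}}_{n_{b}}^{\mathbb{T}}f\big|\Big\|_{\infty}\le\Big(\sup_{b\in\mathbb{N}}\big\|V_{n_{b}}^{\mathbb{T}}\big\|_{1}\Big)\|f\|_{\infty}.
\]
The right-hand side is finite because the $L_{\infty}$-operator norm of a convolution equals the $L_{1}$-norm of its kernel, so $\|V_{n_{b}}^{\mathbb{T}}\|_{1}=\|\mathcal{T}_{n_{b}}^{\mathbb{T}}\|_{L_{\infty}(\mathbb{I})\to L_{\infty}(\mathbb{I})}$, and this is uniformly bounded by assertion (ii) of Theorem GM (available since (i)--(iii) are equivalent). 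Set $C_{\ast}:=\sup_{b}\|V_{n_{b}}^{\mathbb{T}}\|_{1}<\infty$ and $\mathcal{M}f:=\sup_{b\in\mathbb{N}}|\widetilde{\mathcal{T}}_{n_{b}}^{\mathbb{T}}f|$; we have just shown $\mathcal{M}\colon L_{\infty}\to L_{\infty}$ with constant $C_{\ast}$.

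\textbf{The bound \eqref{wk11}: the main step.} I would follow the Calder\'on--Zygmund/atomic scheme, and the crucial point is to show that $\mathcal{M}$ is \emph{quasi-local}, i.e. if $\phi\in L_{1}(\mathbb{I})$ is supported on a dyadic interval $I$ with $\int_{I}\phi\,d\mu=0$ then $\int_{\mathbb{I}\setminus I}\mathcal{M}\phi\,d\mu\lesssim\|\phi\|_{1}$. Writing $c_{I}$ for the left endpoint of $I$ and using the vanishing mean, $\widetilde{\mathcal{T}}_{n_{b}}^{\mathbb{T}}\phi(x)=\int_{I}\phi(t)\big(|V_{n_{b}}^{\mathbb{T}}|(x\dotplus t)-|V_{n_{b}}^{\mathbb{T}}|(x\dotplus c_{I})\big)\,d\mu(t)$; taking absolute values, passing $\sup_{b}$ under the integral (legitimate as the integrand is nonnegative), using Tonelli and $\big||\alpha|-|\beta|\big|\le|\alpha-\beta|$, one reduces quasi-locality of $\mathcal{M}$ to the uniform dyadic H\"ormander estimate
\[
\mathcal{R}:=\sup_{I}\ \sup_{t\in I}\ \int_{\mathbb{I}\setminus I}\sup_{b\in\mathbb{N}}\big|V_{n_{b}}^{\mathbb{T}}(x\dotplus t)-V_{n_{b}}^{\mathbb{T}}(x\dotplus c_{I})\big|\,d\mu(x)<\infty ,
\]
the outer supremum over dyadic intervals. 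This is the technical heart of the proof, and I expect it to be the main obstacle; it should come from the same kernel analysis that underlies Theorem GM. Namely, by monotonicity (b) and $\sum_{k}t_{k,n}=1$, Abel summation in \eqref{V} represents $V_{n}^{\mathbb{T}}$ as a $\tau_{2^{k},n}$-weighted superposition of the elementary dyadic blocks $D_{2^{k}}$ and $K_{2^{k}}$; the difference $V_{n}^{\mathbb{T}}(\cdot\dotplus t)-V_{n}^{\mathbb{T}}(\cdot\dotplus c_{I})$ is then supported on and estimated over the dyadic annuli, and summing the contributions yields a bound by $c\,\upsilon(n,\mathbb{T})$. Taking $\sup_{b}$ and invoking $\sup_{b}\upsilon(n_{b},\mathbb{T})<\infty$ (assertion (iii) of Theorem GM) gives $\mathcal{R}<\infty$; only the smoothness of the kernels themselves is needed, the absolute value being absorbed by $\big||\alpha|-|\beta|\big|\le|\alpha-\beta|$.

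\textbf{The bound \eqref{wk11}: conclusion.} Once quasi-locality is in hand, \eqref{wk11} follows from the standard dyadic martingale Hardy-space machinery: for a dyadic atom $a$ supported on $I$ with $\|a\|_{\infty}\le|I|^{-1}$ and $\int a=0$, the trivial estimate $\int_{I}\mathcal{M}a\,d\mu\le|I|\,\|a\|_{\infty}\,C_{\ast}\le C_{\ast}$ together with $\int_{\mathbb{I}\setminus I}\mathcal{M}a\,d\mu\le\mathcal{R}\|a\|_{1}\le\mathcal{R}$ gives $\mathcal{M}\colon H_{1}(\mathbb{I})\to L_{1}(\mathbb{I})$ (the passage from atoms to general $H_{1}$ functions uses only that $\mathcal{M}$ is nonnegative and sublinear, so $\sup_{b}\sum_{i}|\lambda_{i}||\widetilde{\mathcal{T}}_{n_{b}}^{\mathbb{T}}a_{i}|\le\sum_{i}|\lambda_{i}|\mathcal{M}a_{i}$); interpolating this with $\mathcal{M}\colon L_{\infty}\to L_{\infty}$ yields $\mathcal{M}\colon L_{2}\to L_{2}$; and finally, for $\lambda>\|f\|_{1}$ the dyadic Calder\'on--Zygmund decomposition $f=g+\sum_{j}b_{j}$ at height $\lambda$ — the good part $g$ ($\|g\|_{\infty}\le2\lambda$, $\|g\|_{1}\le\|f\|_{1}$) handled by $\|\mathcal{M}g\|_{2}\lesssim\|g\|_{2}\le(2\lambda\|f\|_{1})^{1/2}$ and Chebyshev, the bad part by $\sum_{j}\int_{\mathbb{I}\setminus I_{j}}\mathcal{M}b_{j}\,d\mu\le\mathcal{R}\sum_{j}\|b_{j}\|_{1}\lesssim\mathcal{R}\|f\|_{1}$ and $\mu(\bigcup_{j}I_{j})\le\|f\|_{1}/\lambda$ — gives $\mu(\{\mathcal{M}f>\lambda\})\lesssim\|f\|_{1}/\lambda$; the case $\lambda\le\|f\|_{1}$ is trivial. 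This establishes \eqref{wk11}.
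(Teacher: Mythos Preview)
Your strategy is sound in outline but diverges substantially from the paper's argument, and the part you flag as ``the main obstacle'' is genuinely left open.

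\textbf{What the paper does.} The paper avoids the Calder\'on--Zygmund machinery entirely. It invokes the explicit decomposition $V_{n_b}^{\mathbb{T}}=V_{n_b,1}^{\mathbb{T}}+V_{n_b,2}^{\mathbb{T}}$ from \cite{GoginavaQM2022} and establishes a \emph{pointwise} domination
\[
\sup_{b\in\mathbb{N}}\bigl|\widetilde{\mathcal{T}}_{n_b}^{\mathbb{T}}f\bigr|
\ \lesssim\ \sup_{k\in\mathbb{N}}\bigl(|f|\ast|K_k|\bigr)\ +\ E^{\ast}(|f|),
\]
the first piece coming from rewriting $V_{n_b,1}^{\mathbb{T}}$ as a signed combination of $D_{2^s}$'s with total weight $\lesssim\upsilon(n_b,\mathbb{T})$, and the second from an estimate in \cite{GoginavaMatrix} for $|f|\ast|V_{n_b,2}^{\mathbb{T}}|$. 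Both inequalities \eqref{wk11} and \eqref{inf} then follow at once from the known weak $(1,1)$ and $L_\infty$ bounds for $E^{\ast}$ and for the Fej\'er maximal operator $\sup_k(|f|\ast|K_k|)$. No H\"ormander condition, no atomic decomposition, no interpolation.

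\textbf{Where your argument is incomplete.} Your $L_\infty$ step is correct. For \eqref{wk11}, however, everything hinges on $\mathcal{R}<\infty$ with the supremum over $b$ \emph{inside} the integral, and your sketch does not establish this. Two specific issues: (a) the claim that Abel summation in \eqref{V} yields a $\tau_{2^k,n}$-weighted combination of ``$D_{2^k}$ and $K_{2^k}$'' is not the actual structure; the genuine decomposition (see \eqref{V12} in the paper) produces Fej\'er kernels $K_l$ for \emph{general} $l$, multiplied by Walsh characters $w_{n_b}w_{n_b(s)}w_{2^s-1}$ that depend on $n_b$; (b) those $b$-dependent character factors mean that the difference $V_{n_b}^{\mathbb{T}}(x\dotplus t)-V_{n_b}^{\mathbb{T}}(x\dotplus c_I)$ is not simply controlled by smoothness of $D_{2^k}$ or $K_l$ alone, and taking $\sup_b$ before integrating is exactly where the difficulty lies. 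This can very likely be pushed through using the same kernel decomposition the paper uses --- but at that point you are doing the paper's computation plus an extra layer of CZ bookkeeping, rather than bypassing it.

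In short: the paper's route is a direct pointwise bound by known maximal operators; yours is the abstract H\"ormander/CZ route, which is conceptually reasonable but whose decisive estimate $\mathcal{R}<\infty$ you have only asserted, not proved.
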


\begin{proof} By
\cite{GoginavaQM2022} the function $V_{n}$ has the following representation: \begin{%
equation} V_{n_{b}}^\mathbb{T}=V_{n_{b},1}^\mathbb{T}+V_{n_{b},2}^\mathbb{T}%
, \label{V1+V2} \end{equation}
where
\begin{equation} V_{n_{b},1}^\mathbb{T}%
:=w_{n_{b}}\sum\limits_{s=0}^{|n_{b}|}\varepsilon _{s}\left( n_{b}\right)
\tau _{n_{b\left( s\right) },n_{b}}w_{2^{s}}D_{2^{s}}, \label{V11} \end{%
equation}\begin{equation*} n_{b}\left( s\right)
:=\sum\limits_{j=0}^{s}\varepsilon _{j}\left( n_{b}\right) 2^{j} 
\end{equation*}
and\begin{eqnarray} V_{n_{b},2}^\mathbb{T} &:&=-w_{n_{b}}\sum%
\limits_{s=0}^{|n_{b}|}\varepsilon _{s}\left( n_{b}\right) w_{n_{b}\left(
s\right) }w_{2^{s}-1} \label{V12} \\ &&\times
\bigg(\sum\limits_{l=1}^{2^{s}-2}\left( t_{n_{b}\left( s-1\right)
+l,n_{b}}-t_{n_{b}\left( s-1\right) +l+1,n_{b}}\right) \notag \\ && \times
lK_{l}+t_{n_{b}\left( s-1\right) +2^{s}-1,n_{b}}\left( 2^{s}-1\right)
K_{2^{s}-1}\bigg). \notag \end{eqnarray}
It then follows from (\ref{Tn}) and (%
\ref{V1+V2}) that \begin{equation} \widetilde{\mathcal{T}}^{\mathbb{T}%
}_{n_{b}}\left\vert f\right\vert \leq \left\vert f\right\vert \ast
\left\vert V_{n_{b},1}^\mathbb{T}\right\vert +\left\vert f\right\vert \ast
\left\vert V_{n_{b},2}^\mathbb{T}\right\vert . \label{V|f|} \end{equation}
Due to the equality $w_{2^{s}}D_{2^{s}}=D_{2^{s+1}}-D_{2^{s}}$ \cite[p. 34]{%
SWS}, we rewrite $V_{n_{b},1}$ as follows \begin{eqnarray} V_{n_{b},1}^%
\mathbb{T} &=&\sum\limits_{s=1}^{\left\vert n_{b}\right\vert }\left(
\varepsilon _{s-1}\left( n_{b}\right) -\varepsilon _{s}\left( n_{b}\right)
\right) \tau _{n_{b}\left( s-1\right) ,n_{b}}D_{2^{s}} \label{vv} \\
&&+\sum\limits_{s=1}^{\left\vert n_{b}\right\vert }\left( \tau _{n_{b}\left(
s-1\right) ,n_{b}}-\tau _{n_{b}\left( s\right) ,n_{b}}\right) \varepsilon
_{s}\left( n_{b}\right) D_{2^{s}} \notag \\ &&+\varepsilon _{\left\vert
n_{b}\right\vert }\left( n_{b}\right) \tau _{n_{b}\left( \left\vert
n_{b}\right\vert \right) ,n_{b}}D_{2^{\left\vert n_{b}\right\vert
+1}}-\varepsilon _{0}\left( \left\vert n_{b}\right\vert \right) \tau
_{n_{b}\left( 0\right) }, \notag \end{eqnarray}and consequently,\begin{%
eqnarray} \left\vert f\right\vert \ast \left\vert V_{n_{b},1}^\mathbb{T}%
\right\vert &\leq &\sum\limits_{s=1}^{\left\vert n_{b}\right\vert
}\left\vert \varepsilon _{s-1}\left( n_{b}\right) -\varepsilon _{s}\left(
n_{b}\right) \right\vert \tau _{2^{s},n_{b}}\left( \left\vert f\right\vert
\ast D_{2^{s}}\right) \label{v1} \\ &&+\sum\limits_{s=1}^{\left\vert
n_{b}\right\vert }2^{s}t_{2^{s},n_{b}}\left( \left\vert f\right\vert \ast
D_{2^{s}}\right) \notag \\ &&+\tau _{n_{b}\left( \left\vert n_{b}\right\vert
\right) ,n_{b}}\left( \left\vert f\right\vert \ast D_{2^{\left\vert
n_{b}\right\vert +1}}\right) +\tau_{n_{b}\left( 0\right) }\left\Vert
f\right\Vert _{1} \notag \\ &\lesssim &\upsilon (n_{a},\mathbb{T})E^{\ast
}\left( \left\vert f\right\vert \right) \lesssim E^{\ast }\left( \left\vert
f\right\vert \right) , \notag \end{eqnarray}where $E^{\ast }\left( f\right)
:=\sup\limits_{n\in \mathbb{N}}\left\vert S_{2^{n}}\left( f\right)
\right\vert $. According to \cite{GoginavaMatrix}, one has \begin{equation}
\sup\limits_{a\in \mathbb{N}}\left( \left\vert f\right\vert \ast \left\vert
V_{n_{b},2}^{\mathbb{T}}\right\vert \right) \lesssim \sup\limits_{k\in
\mathbb{N}}\left( \left\vert f\right\vert \ast \left\vert K_{k}\right\vert
\right) +E^{\ast }\left( \left\vert f\right\vert ;x\right) . \label{v2} \end{%
equation}Now, combining (\ref{Tn}), (\ref{V|f|}), (\ref{v1}) and (\ref{v2})
altogether imply \begin{equation} \sup\limits_{b\in \mathbb{N}}\left\vert
\widetilde{\mathcal{T}}_{n_{b}}^{\mathbb{T}}f\right\vert \lesssim
\sup\limits_{k\in \mathbb{N}}\left( \left\vert f\right\vert \ast \left\vert
K_{k}\right\vert \right) +E^{\ast }\left( \left\vert f\right\vert ;x\right)
. \label{add}
\end{equation}
According to \cite[Ch. 3]{SWS} and \cite{%
GogPositivity} we have \begin{equation} \left\Vert E^{\ast }\left(
\left\vert f\right\vert \right) \right\Vert _{L_{1,\infty }(\mathbb{I}%
)}\lesssim \left\Vert f\right\Vert _{1}\text{ \ }\left( f\in L_{1}\left(
\mathbb{I}\right) \right) , \label{11} \end{equation}\begin{equation}
\left\Vert \sup\limits_{k\in \mathbb{N}}\left( \left\vert f\right\vert \ast
\left\vert K_{k}\right\vert \right) \right\Vert _{L_{1,\infty }(\mathbb{I}%
)}\lesssim \left\Vert f\right\Vert _{1}\text{ \ }\left( f\in L_{1}\left(
\mathbb{I}\right) \right) . \label{11-2} \end{equation}The last inequalities
together with \eqref{add} yield the required assertions. This completes the
proof.
\end{proof}

\begin{proof}[Proof of Theorem \protect\ref{mt1}] Now,
using \eqref{tens1} and iteration together with \eqref{iter1}, one finds
\small
\begin{eqnarray} \sup\limits_{\left( a,b\right) \in \mathbb{N}%
^{2}}\left\vert f\ast \left( V_{n_{a}^{0}}^{\mathbb{T}_{0}}\otimes
V_{n_{b}^{1}}^{\mathbb{T}_{1}}\right) (x^{0},x^{1})\right\vert
&=&\sup\limits_{\left( a,b\right) \in \mathbb{N}^{2}}\left\vert \int\limits_{%
\mathbb{I}^{2}}f\left( u^{0},u^{1}\right) V_{n_{a}^{0}}^{\mathbb{T}%
_{0}}\left( x^{0}\dotplus u^{0}\right) V_{n_{b}^{1}}^{\mathbb{T}_{1}}\left(
x^{1}\dotplus u^{1}\right) d\mathbf{\mu }\left( u^{0},u^{1}\right)
\right\vert \notag \\ &\leq &\sup\limits_{a\in \mathbb{N}}\int\limits_{%
\mathbb{I}}\sup\limits_{b\in \mathbb{N}}\left\vert \int\limits_{\mathbb{I}%
}f\left( u^{0},u^{1}\right) V_{n_{b}^{1}}^{\mathbb{T}_{1}}\left(
x^{1}\dotplus u^{1}\right) du^{1}\right\vert \left\vert V_{n_{a}^{0}}^{%
\mathbb{T}_{0}}\left( x^{0}\dotplus u^{0}\right) \right\vert du^{0} \notag
\\ &=&\sup\limits_{a\in \mathbb{N}}\int\limits_{\mathbb{I}}\sup\limits_{b\in
\mathbb{N}}\left\vert \mathcal{T}_{n_{b}}^{\mathbb{T}_{1}}f\left(
u^{0},x^{1}\right) \right\vert \left\vert V_{n_{a}^{0}}^{\mathbb{T}%
_{0}}\left( x^{0}\dotplus u^{0}\right) \right\vert du^{0} \notag \\
&=&\sup\limits_{a\in \mathbb{N}}\widetilde{\mathcal{T}}_{n_{a}^{0}}^{\mathbb{%
T}_{0}}\left( \sup\limits_{b\in \mathbb{N}}\left\vert \mathcal{T}%
_{n_{b}^{1}}^{\mathbb{T}_{1}}f\right\vert \right) (x^{0},x^{1}). \label{GG}
\end{eqnarray}
\normalsize
On the other hand, by \cite{GoginavaQM2022} one gets \begin{equation*}
\left\Vert \sup\limits_{b\in \mathbb{N}}\left\vert \mathcal{T}_{n_{b}^{1}}^{%
\mathbb{T}_{1}}f\left( x^{0},\cdot \right) \right\vert \right\Vert
_{L_{1,\infty }(\mathbb{I})}\lesssim \left\Vert f\left( x^{0},\cdot \right)
\right\Vert _{1},\ \ f(x^{0},\cdot )\in L_{1}\left( \mathbb{I}\right) , \end{%
equation*}and\begin{equation*} \left\Vert \sup\limits_{b\in \mathbb{N}%
}\left\vert \mathcal{T}_{n_{b}^{1}}^{\mathbb{T}_{1}}f\left( x^{0},\cdot
\right) \right\vert \right\Vert _{\infty }\lesssim \left\Vert f\left(
x^{0},\cdot \right) \right\Vert _{\infty },\ \ f(x^{0},\cdot )\in L_{\infty
}\left( \mathbb{I}\right) \end{equation*}for a. e. $x^{0}\in \mathbb{I}$.
Therefore, using a well-known fact \cite[p. 92]{TorchBook}, the following
inequality is achieved \begin{equation} \left\Vert \sup\limits_{b\in \mathbb{%
N}}\left\vert \mathcal{T}_{n_{b}^{1}}^{\mathbb{T}_{1}}f\left( x^{0},\cdot
\right) \right\vert \right\Vert _{1}\lesssim 1+\int\limits_{\mathbb{I}%
}\left\vert f\left( x^{0},x^{1}\right) \right\vert \ln ^{+}\left\vert
f\left( x^{0},x^{1}\right) \right\vert dx^{1} \label{zy} \end{equation}where
$f\left( x^{0},\cdot \right) \in L\ln L\left( \mathbb{I}\right) $ for a. e. $%
x^{0}\in \mathbb{I}$. Hence, we can write
\begin{eqnarray*}
&&\displaystyle\left\Vert \sup\limits_{\left( a,b\right) \in \mathbb{N}%
^{2}}\left\vert f\ast \left( V_{n_{a}^{0}}^{\mathbb{T}_{0}}\otimes
V_{n_{b}^{1}}^{\mathbb{T}_{1}}\right) \right\vert \right\Vert _{L_{1,\infty
}(\mathbb{I}^{2})} \\
&\overset{\text{by (\ref{GG})}}{\lesssim }&\left\Vert
\sup\limits_{a\in \mathbb{N}}\widetilde{\mathcal{T}}_{n_{a}^{0}}^{\mathbb{T}%
_{0}}\left( \sup\limits_{b\in \mathbb{N}}\left\vert \mathcal{T}_{n_{b}^{1}}^{%
\mathbb{T}_{1}}f\right\vert \right) \right\Vert _{L_{1,\infty }(\mathbb{I}%
^{2})} \\ &\overset{\text{by definition of norm }}{=}&\sup\limits_{\lambda
>0}\lambda \left\Vert \mathbf{1}_{\left\{ \sup\limits_{a\in \mathbb{N}}%
\widetilde{\mathcal{T}}_{n_{a}^{0}}^{\mathbb{T}_{0}}\left( \sup\limits_{b\in
\mathbb{N}}\left\vert \mathcal{T}_{n_{b}^{1}}^{\mathbb{T}_{1}}f\right\vert
\right) >\lambda \right\} }\right\Vert _{L_{1}(\mathbb{I}^{2})} \\ &\overset{%
\text{by Fubin's theorem}}{=}&\sup\limits_{\lambda >0}\lambda \int\limits_{%
\mathbb{I}}\left( \int\limits_{\mathbb{I}}\mathbf{1}_{\left\{
\sup\limits_{a\in \mathbb{N}}\widetilde{\mathcal{T}}_{n_{a}^{0}}^{\mathbb{T}%
_{0}}\left( \sup\limits_{b\in \mathbb{N}}\left\vert \mathcal{T}_{n_{b}^{1}}^{%
\mathbb{T}_{1}}f\right\vert \right) >\lambda \right\} }(x^{0},x^{1})d\mu
\left( x^{0}\right) \right) d\mu \left( x^{1}\right) \\ &\leq &\int\limits_{%
\mathbb{I}}\left( \sup\limits_{\lambda >0}\lambda \int\limits_{\mathbb{I}}%
\mathbf{1}_{\left\{ \sup\limits_{a\in \mathbb{N}}\widetilde{\mathcal{T}}%
_{n_{a}^{0}}^{\mathbb{T}_{0}}\left( \sup\limits_{b\in \mathbb{N}}\left\vert
\mathcal{T}_{n_{b}^{1}}^{\mathbb{T}_{1}}f\right\vert \right) >\lambda
\right\} }(x^{0},x^{1})d\mu \left( x^{0}\right) \right) d\mu \left(
x^{1}\right) \\ &=&\int\limits_{\mathbb{I}}\sup\limits_{\lambda >0}\lambda
\mu \left\{ x^{0}:\sup\limits_{a\in \mathbb{N}}\widetilde{\mathcal{T}}%
_{n_{a}^{0}}^{\mathbb{T}_{1}}\left( \sup\limits_{b\in \mathbb{N}}\left\vert
\mathcal{T}_{n_{b}^{1}}^{\mathbb{T}_{1}}f\left( x^{0},x^{1}\right)
\right\vert \right) >\lambda \right\} d\mu \left( x^{1}\right) \\
&=&\int\limits_{\mathbb{I}}\bigg\|\sup\limits_{a\in \mathbb{N}}\widetilde{%
\mathcal{T}}_{n_{a}^{0}}^{\mathbb{T}_{1}}\left( \sup\limits_{b\in \mathbb{N}%
}\left\vert \mathcal{T}_{n_{b}^{1}}^{\mathbb{T}_{1}}f\right\vert \right)
(\cdot ,x^{1})\bigg\|_{L_{1,\infty }(\mathbb{I})}d\mu \left( x^{1}\right) \\
&\overset{\text{by Proposition \ref{MMo} }}{\leq }&\int\limits_{\mathbb{I}%
}\left\Vert \sup\limits_{b\in \mathbb{N}}\left\vert \mathcal{T}_{n_{b}^{1}}^{%
\mathbb{T}_{1}}f(\cdot ,x^{1})\right\vert \right\Vert _{1}d\mu \left(
x^{1}\right) \\ &\overset{\text{by Fubin's theorem }}{=}&\int\limits_{%
\mathbb{I}}\left\Vert \sup\limits_{b\in \mathbb{N}}\left\vert \mathcal{T}%
_{n_{b}^{1}}^{\mathbb{T}_{1}}f(x^{0},\cdot )\right\vert \right\Vert _{1}d\mu
\left( x^{0}\right) \\ &\overset{\text{by (\ref{zy}) }}{\lesssim }%
&\int\limits_{\mathbb{I}}\left( \int\limits_{\mathbb{I}}\left( 1+\left\vert
f\left( x,y\right) \right\vert \ln ^{+}\left\vert f\left( x,y\right)
\right\vert \right) d\mu \left( x^{1}\right) \right) d\mu \left(
x^{0}\right) \\ &\overset{\text{by Fubin's theorem }}{\lesssim }%
&1+\int\limits_{\mathbb{I}}\left\vert f\left( x^{0},x^{1}\right) \right\vert
\ln ^{+}\left\vert f\left( x^{0},x^{1}\right) \right\vert d\mu \left(
x^{0},x^{1}\right) . \end{eqnarray*} This completes the proof. \end{proof}

\section{Proof of Theorem \protect\ref{mt2}}

To establish the proof of
Theorem \ref{mt2}, we first provide an auxiliary fact. \begin{lemma} \label{%
Unm} Let $\{\xi _{k,n}^{i}\}_{k,n\in \mathbb{P}}$ $\left( i=0,1\right) $ be
two sequences such that \begin{itemize} \item[(a)] $\sup\limits_{n\in
\mathbb{P}}\sum\limits_{s=1}^{n}\sum\limits_{k=1}^{s}\left\vert \xi
_{k,n}^{i}\right\vert <\infty $, \item[(b)] $\lim\limits_{n\rightarrow
\infty }\xi _{k,n}^{i}=0$, for each fixed $k$. \end{itemize} Assume that $%
\{U_{n^{0},n^{1}}\}_{n^{0},n^{1}\in \mathbb{P}}$ is bounded and \begin{%
equation*} \lim_{\min \left\{ {n}^{0}{,n}^{1}\right\} \rightarrow \infty
}U_{n^{0},n^{1}}=0. \end{equation*}
Then \begin{equation} \lim_{\min \left\{
{n}^{0}{,n}^{1}\right\} \rightarrow \infty
}\sum\limits_{s^{0}=1}^{n^{0}}\sum\limits_{s^{1}=1}^{n^{1}}\varepsilon
_{s^{0}}\left( n^{0}\right) \varepsilon _{s^{1}}\left( n^{1}\right)
\sum\limits_{k^{0}=1}^{s^{0}}\sum\limits_{k^{1}=1}^{s^{1}}\xi
_{k^{0},n^{0}}^{0}\xi _{k^{1},n^{1}}^{1}U_{k^{0},n^{1}}=0 \label{zero} \end{%
equation} \end{lemma}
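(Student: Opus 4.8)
The plan is to collapse the four summation indices in \eqref{zero} down to a single sum in $k^{0}$ times a uniformly bounded factor, and then dispose of that remaining sum by an $\varepsilon$--$N$ argument of dominated-convergence type: the low-$k^{0}$ block is annihilated by the pointwise vanishing of the coefficients, while the high-$k^{0}$ block is controlled by the convergence of $U_{n^{0},n^{1}}$.

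First I would use that $U_{k^{0},n^{1}}$ does not depend on $s^{0},s^{1},k^{1}$ to factor the sum in \eqref{zero} as
\[
\Bigl(\sum_{s^{1}=1}^{n^{1}}\varepsilon_{s^{1}}(n^{1})\sum_{k^{1}=1}^{s^{1}}\xi_{k^{1},n^{1}}^{1}\Bigr)\,B(n^{0},n^{1}),\qquad B(n^{0},n^{1}):=\sum_{s^{0}=1}^{n^{0}}\varepsilon_{s^{0}}(n^{0})\sum_{k^{0}=1}^{s^{0}}\xi_{k^{0},n^{0}}^{0}\,U_{k^{0},n^{1}}.
\]
By hypothesis (a) with $i=1$, the prefactor has modulus at most $C_{1}:=\sup_{n}\sum_{s=1}^{n}\sum_{k=1}^{s}|\xi_{k,n}^{1}|<\infty$, so it is enough to prove $B(n^{0},n^{1})\to 0$ as $\min\{n^{0},n^{1}\}\to\infty$. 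Interchanging the two inner sums in $B$ gives $B(n^{0},n^{1})=\sum_{k^{0}=1}^{n^{0}}\eta_{k^{0},n^{0}}\,U_{k^{0},n^{1}}$, where $\eta_{k^{0},n^{0}}:=\bigl(\sum_{s=k^{0}}^{n^{0}}\varepsilon_{s}(n^{0})\bigr)\xi_{k^{0},n^{0}}^{0}$. Two properties of $\eta$ are recorded. (i) Uniform summability: $\sum_{k^{0}=1}^{n^{0}}|\eta_{k^{0},n^{0}}|=\sum_{s=1}^{n^{0}}\varepsilon_{s}(n^{0})\sum_{k=1}^{s}|\xi_{k,n^{0}}^{0}|\le\sum_{s=1}^{n^{0}}\sum_{k=1}^{s}|\xi_{k,n^{0}}^{0}|\le C_{0}$, where $C_{0}:=\sup_{n}\sum_{s=1}^{n}\sum_{k=1}^{s}|\xi_{k,n}^{0}|$. (ii) Pointwise vanishing: for each fixed $k^{0}$, $\eta_{k^{0},n^{0}}\to 0$ as $n^{0}\to\infty$, since $0\le\sum_{s=k^{0}}^{n^{0}}\varepsilon_{s}(n^{0})\le |n^{0}|+1$, while hypothesis (a) forces $(n^{0}-k^{0}+1)|\xi_{k^{0},n^{0}}^{0}|\le C_{0}$, so that $|\eta_{k^{0},n^{0}}|\le (|n^{0}|+1)\,C_{0}/(n^{0}-k^{0}+1)\to 0$.

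Then I would fix $\delta>0$, put $M:=\sup_{m,p}|U_{m,p}|<\infty$, and choose $P$ so that $|U_{m,p}|<\delta$ whenever $\min\{m,p\}\ge P$. For any threshold $N\ge P$ and all $n^{0}>N$, $n^{1}\ge P$, splitting the sum for $B$ at $k^{0}=N$ yields
\[
|B(n^{0},n^{1})|\le M\sum_{k^{0}=1}^{N}|\eta_{k^{0},n^{0}}|+\delta\sum_{k^{0}=N+1}^{n^{0}}|\eta_{k^{0},n^{0}}|\le M\sum_{k^{0}=1}^{N}|\eta_{k^{0},n^{0}}|+\delta C_{0}.
\]
Holding $N$ fixed and letting $n^{0}\to\infty$, property (ii) forces the finite sum $\sum_{k^{0}=1}^{N}|\eta_{k^{0},n^{0}}|$ to tend to $0$; hence $\limsup_{\min\{n^{0},n^{1}\}\to\infty}|B(n^{0},n^{1})|\le\delta C_{0}$, and since $\delta>0$ is arbitrary, $B(n^{0},n^{1})\to 0$. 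Combined with the prefactor bound, this proves \eqref{zero}.

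The only genuinely delicate point, and the step I would watch most closely, is property (ii): the pointwise hypothesis (b) by itself cannot absorb the factor $\sum_{s=k^{0}}^{n^{0}}\varepsilon_{s}(n^{0})$, which can grow like $\log_{2}n^{0}$. What rescues the argument is the quantitative decay $|\xi_{k,n}^{0}|\le C_{0}/(n-k+1)$ that is already contained in the summability hypothesis (a) and that outpaces this logarithmic growth; everything else is routine rearrangement of sums and the elementary low/high splitting.
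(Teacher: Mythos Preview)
Your argument is correct. The paper itself supplies no proof of this lemma beyond the remark ``The proof is evident,'' so there is nothing substantive to compare against; you have simply written out in full the routine dominated-convergence--style argument the authors left to the reader. One small comment: your careful observation that hypothesis (b) alone cannot absorb the factor $\sum_{s=k^{0}}^{n^{0}}\varepsilon_{s}(n^{0})$, and that one must instead extract the quantitative decay $|\xi_{k,n}^{0}|\le C_{0}/(n-k+1)$ from hypothesis (a), in fact shows that (b) is already a consequence of (a) and is therefore redundant in the statement.
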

The proof is evident.

From the definitions of \eqref{%
Wnm},\eqref{H2} and \eqref{H1}, we immediately obtain the following
estimations \begin{equation} \left( \left\vert f\right\vert \ast
D_{2^{s^{0}}}\otimes D_{2^{s^{1}}}\right) \left( x^{0},x^{1}\right) \leq
W_{s^{0},s^{1}}f\left( x^{0},x^{1}\right) , \label{wl-1} \end{equation}%
\begin{equation} W_{s^{0},s^{1}}f\left( x^{0},x^{1}\right) \leq
2^{s^{0}}H_{s^{1}}^{\left( 1\right) }f\left( x^{0},x^{1}\right) , \label{wl-2%
} \end{equation}\begin{equation} W_{s^{0},s^{1}}f\left( x^{0},x^{1}\right)
\leq 2^{s^{1}}H_{s^{0}}^{\left( 0\right) }f\left( x^{0},x^{1}\right) \label{%
wl-3} \end{equation}and\begin{equation} W_{s^{0},s^{1}}f\left(
x^{0},x^{1}\right) \leq c\left( x^{0},x^{1}\right) 2^{s^{0}+s^{1}}. \label{%
wl-4} \end{equation}

\begin{proof}[Proof of Theorem \protect\ref{mt2}]
Assume that $f\in L\ln L(\mathbb{I}^{2})$. Let $%
(x^{0},x^{1})\in \mathbb{I}^{2}$ be a Walsh-Lebesgue point for $f$. We are
going to establish that \begin{equation*} \lim_{\min \left\{
{n_{a}^{0},n_{b}^{1}}\right\} \rightarrow \infty }((\mathcal{T}_{n_{a}^{0}}^{%
\mathbb{T}_{0}}\otimes \mathcal{T}_{n_{b}^{1}}^{\mathbb{T}%
_{1}})f)(x^{0},x^{1})=f(x^{0},x^{1}) \end{equation*}To do so, we denote $%
f_{x^{0},x^{1}}\left( u^{0},u^{1}\right) :=f\left( x^{0},x^{1}\right)
-f\left( u^{0},u^{1}\right) $. Then, it is enough to prove \begin{equation*}
(\mathcal{T}_{n_{a}^{0}}^{\mathbb{T}_{0}}\otimes \mathcal{T}_{n_{b}^{1}}^{%
\mathbb{T}_{1}})f_{x^{0},x^{1}}\rightarrow 0\ \ \text{as}\ \ \min \left\{
{n_{a}^{0},n_{b}^{1}}\right\} \rightarrow \infty . \end{equation*}By \eqref{%
tens} and \eqref{tens1}, keeping in mind \eqref{V1+V2}, we have \begin{%
eqnarray} (\mathcal{T}_{n_{a}^{0}}^{\mathbb{T}_{0}}\otimes \mathcal{T}%
_{n_{b}^{1}}^{\mathbb{T}_{1}})f_{x^{0},x^{1}} &=&f_{x^{0},x^{1}}\ast \left(
V_{n_{a}^{0}}^{\mathbb{T}_{0}}\otimes V_{n_{b}^{1}}^{\mathbb{T}_{1}}\right)
\notag \\[2mm] &=&f_{x^{0},x^{1}}\ast \left( V_{n_{a}^{0},1}^{\mathbb{T}%
_{0}}\otimes V_{n_{b}^{1},1}^{\mathbb{T}_{1}}\right) +f_{x^{0},x^{1}}\ast
\left( V_{n_{a}^{0},1}^{\mathbb{T}_{0}}\otimes V_{n_{b}^{1},2}^{\mathbb{T}%
_{1}}\right) \notag \\ &&+f_{x^{0},x^{1}}\ast \left( V_{n_{a}^{0},2}^{%
\mathbb{T}_{0}}\otimes V_{n_{b}^{1},1}^{\mathbb{T}_{1}}\right)
+f_{x^{0},x^{1}}\ast \left( V_{n_{a}^{0},2}^{\mathbb{T}_{1}}\otimes
V_{n_{b}^{1},2}^{\mathbb{T}_{2}}\right) . \label{conv=1-4} \end{eqnarray}
The equation (\ref{vv}) yields {\small \begin{eqnarray} &&\left\vert
f_{x^{0},x^{1}}\ast \left( V_{n_{a}^{0},1}^{\mathbb{T}_{0}}\otimes
V_{n_{b}^{1},1}^{\mathbb{T}_{1}}\right) \left( x,y\right) \right\vert \label{%
j1-j4} \\ &\lesssim &\underbrace{\sum\limits_{s^{0}=1}^{\left\vert
n_{a}^{0}\right\vert +1}\tilde{\varepsilon}_{s^{0}}(n_{a}^{0})\tau
_{2^{s^{0}},n_{a}^{0}}^{0}\sum\limits_{s^{1}=1}^{\left\vert
n_{b}^{1}\right\vert }\tilde{\varepsilon}_{s^{1}}(n_{b}^{1})\tau
_{2^{s^{1}},n_{b}^{1}}^{1}\left( \left\vert f_{x^{0},x^{1}}\right\vert \ast
D_{2^{s^{0}}}\otimes D_{2^{s^{1}}}\right) \left( x^{0},x^{1}\right) }%
_{J_{1}\left( x^{0},x^{1}\right) } \notag \\[2mm] &&+\underbrace{%
\sum\limits_{s^{0}=1}^{\left\vert n_{a}^{0}\right\vert +1}\tilde{\varepsilon}%
_{s^{0}}(n_{a}^{0})\tau
_{2^{s^{0}},n_{a}^{0}}^{0}\sum\limits_{s^{1}=0}^{\left\vert
n_{b}^{1}\right\vert }2^{s^{1}}\tau _{2^{s^{1}},n_{b}^{1}}^{1}\left(
\left\vert f_{x^{0},x^{1}}\right\vert \ast D_{2^{s^{0}}}\otimes
D_{2^{s^{1}}}\right) \left( x^{0},x^{1}\right) }_{J_{2}\left(
x^{0},x^{1}\right) } \notag \\[2mm] &&+\underbrace{\sum\limits_{s^{0}=1}^{%
\left\vert n_{a}^{0}\right\vert }2^{s^{0}}\tau
_{2^{s^{0}},n_{a}^{0}}^{0}\sum\limits_{s^{1}=1}^{\left\vert
n_{b}^{1}\right\vert +1}\tilde{\varepsilon}_{s^{1}}(n_{b}^{1})\tau
_{2^{s^{1}},n_{b}^{1}}^{1}\left( \left\vert f_{x^{0},x^{1}}\right\vert \ast
D_{2^{s^{0}}}\otimes D_{2^{s^{1}}}\right) \left( x^{0},x^{1}\right) }%
_{J_{3}\left( x^{0},x^{1}\right) } \notag \\[2mm] &&+\underbrace{%
\sum\limits_{s^{0}=0}^{\left\vert n_{a}^{0}\right\vert }2^{s^{0}}\tau
_{2^{s^{0}},n_{a}^{0}}^{0}\sum\limits_{s^{1}=0}^{\left\vert
n_{b}^{1}\right\vert }2^{s^{1}}\tau _{2^{s^{1}},n_{b}^{1}}^{1}\left(
\left\vert f_{x^{0},x^{1}}\right\vert \ast D_{2^{s^{0}}}\otimes
D_{2^{s^{1}}}\right) \left( x^{0},x^{1}\right) }_{J_{4}\left(
x^{0},x^{1}\right) }. \notag \end{eqnarray}}{\normalsize where \begin{%
equation*} \tilde{\varepsilon}_{m}(n)=|\varepsilon _{m-1}(n)-\varepsilon
._{m}(n)| \end{equation*}} {\normalsize Set $\omega \left( t\right)
:=\left\lfloor \frac{1}{4}\log _{2}\left( 1/t\right) \right\rfloor ,t\in
\left( 0,1\right) $. } {\normalsize Now, let us represent $J_{1}\left(
x,y\right) $ as a sum of four terms }{\small \begin{eqnarray} &&J_{1}\left(
x^{0},x^{1}\right) =\left( \sum\limits_{s^{0}=1}^{\omega \left( \left[
t_{0,n_{a}^{0}}^{0}\right] \right) -1}\sum\limits_{s^{1}=1}^{\omega \left( %
\left[ t_{0,n_{b}^{1}}^{1}\right] \right) -1}+\sum\limits_{s^{0}=\omega
\left( \left[ t_{0,n_{a}^{0}}^{0}\right] \right) }^{\left\vert
n_{a}^{0}\right\vert +1}\sum\limits_{s^{1}=0}^{\omega \left( \left[
t_{0,n_{b}^{1}}^{1}\right] \right) -1}\right. \label{J1} \\ &&\left.
+\sum\limits_{s^{0}=1}^{\omega \left( \left[ t_{0,n_{a}^{0}}^{0}\right]
\right) -1}\sum\limits_{s^{1}=\omega \left( \left[ t_{0,n_{b}^{1}}^{1}\right]
\right) }^{\left\vert n_{b}^{1}\right\vert +1}+\sum\limits_{s^{0}=\omega
\left( \left[ t_{0,n_{a}^{0}}^{0}\right] \right) }^{\left\vert
n_{a}^{0}\right\vert +1}\sum\limits_{s^{1}=\omega \left( \left[
t_{0,n_{b}^{1}}^{1}\right] \right) }^{\left\vert n_{b}^{1}\right\vert
+1}\right) \notag \\ &&\left( \tilde{\varepsilon}_{s^{0}}(n_{a}^{0})\tilde{%
\varepsilon}_{s^{1}}(n_{b}^{1})\tau _{2^{s^{0}},n_{a}^{0}}^{0}\tau
_{2^{s^{1}},n_{b}^{1}}^{1}\left( \left\vert f_{x^{0},x^{1}}\right\vert \ast
D_{2^{s^{0}}}\otimes D_{2^{s^{1}}}\right) \left( x^{0},x^{1}\right) \right)
\notag \\ &=&J_{11}\left( x^{0},x^{1}\right) +J_{12}\left(
x^{0},x^{1}\right) +J_{13}\left( x^{0},x^{1}\right) +J_{14}\left(
x^{0},x^{1}\right) . \notag \end{eqnarray}}{\normalsize Due to the condition
of the theorem, one gets }{\small \begin{eqnarray} J_{11}\left( x,y\right)
&\leq &c\left( x^{0},x^{1}\right) \sum\limits_{s^{0}=0}^{\omega \left( \left[
t_{0,n_{a}^{0}}^{0}\right] \right) -1}\sum\limits_{s^{1}=0}^{\omega \left( %
\left[ t_{0,n_{b}^{1}}^{1}\right] \right) -1}2^{s^{0}}\tau
_{2^{s^{0}},n_{a}^{0}}2^{s^{1}}\tau _{2^{s^{1}},n_{b}^{1}} \label{J11} \\
&\leq &c\left( x^{0},x^{1}\right)
t_{0,n_{a}^{0}}^{0}t_{0,n_{b}^{1}}^{1}2^{2\omega \left( \left[
t_{0,n_{a}^{0}}^{0}\right] \right) +2\omega \left( \left[ t_{0,n_{b}^{1}}^{1}%
\right] \right) } \notag \\ &\leq &c\left( x^{0},x^{1}\right) \sqrt{%
t_{0,n_{a}^{0}}^{0}}\sqrt{t_{0,n_{b}^{1}}^{1}}=o\left( 1\right) \ \ \ \text{%
as}\ \ \min \left\{n_{a}^{0},n_{b}^{1}\right\} \rightarrow \infty . \notag
\end{eqnarray}} {\normalsize By \eqref{wl-3}), \eqref{wl1} together with the
conditions of the theorem, we obtain }{\small \begin{eqnarray} J_{12}\left(
x^{0},x^{1}\right) &\leq &c\left( x^{0},x^{1}\right)
\sum\limits_{s^{0}=\omega \left( \left[ t_{0,n_{a}^{0}}^{0}\right] \right)
}^{\left\vert n_{a}^{0}\right\vert +1}\sum\limits_{s^{1}=0}^{\omega \left( %
\left[ t_{0,n_{b}^{1}}^{1}\right] \right) -1}\tilde{\varepsilon}%
_{s^{0}}(n_{a}^{0})\tau _{2^{s^{0}},n_{a}^{0}}^{0}\tau
_{2^{s^{1}},n_{b}^{1}}^{1}W_{s^{0},s^{1}}f\left( x^{0},x^{1}\right) \label{%
J12} \\ &\leq &c\left( x^{0},x^{1}\right) \sum\limits_{s^{0}=\omega \left( %
\left[ t_{0,n_{a}^{0}}^{0}\right] \right) }^{\left\vert n_{a}^{0}\right\vert
+1}\sum\limits_{s^{1}=0}^{\omega \left( \left[ t_{0,n_{b}^{1}}^{1}\right]
\right) -1}\tilde{\varepsilon}_{s^{0}}(n_{a}^{0})\tau
_{2^{s^{0}},n_{a}^{0}}^{0}\tau
_{2^{s^{1}},n_{b}^{1}}2^{s^{1}}H_{s^{0}}^{\left( 0\right) }f\left(
x^{0},x^{1}\right) \notag \\ &\leq &c\left( x^{0},x^{1}\right) \left(
\sum\limits_{s^{0}=\omega \left( \left[ t_{0,n_{a}^{0}}^{0}\right] \right)
}^{\left\vert n_{a}^{0}\right\vert +1}\tilde{\varepsilon}_{s^{0}}(n_{a}^{0})%
\tau _{2^{s^{0}},n_{a}^{0}}^{0}\right) t_{0,n_{b}^{1}}^{1}2^{2\omega \left( %
\left[ t_{0,n_{b}^{1}}^{1}\right] \right) } \notag \\ &\leq &c\left(
x^{0},x^{1}\right) \upsilon (n_{a}^{0},\mathbb{T}^{0})\sqrt{%
t_{0,n_{b}^{1}}^{1}}=o\left( 1\right) \text{ as }\min \left\{
n_{a}^{0},n_{b}^{1}\right\} \rightarrow \infty . \notag \end{eqnarray}%
}{\normalsize Using the same argument, one finds \begin{equation}
J_{13}\left( x^{0},x^{1}\right) =o\left( 1\right) \text{ as }\min \left\{
n_{a}^{0},n_{b}^{1}\right\} \rightarrow \infty . \label{J13} \end{equation}%
Finally, let us examine $J_{14}\left( x,y\right) $. } {\normalsize By
employing \eqref{wl-1} and condition (iii) of Theorem GM, one gets}{\small
\begin{equation*} J_{14}\left( x^{0},x^{1}\right) \leq
\sum\limits_{s^{0}=\omega \left( \left[ t_{0,n_{a}^{0}}^{0}\right] \right)
}^{\left\vert n_{a}^{0}\right\vert +1}\sum\limits_{s^{1}=\omega \left( \left[
t_{0,n_{b}^{1}}^{1}\right] \right) }^{\left\vert n_{b}^{1}\right\vert +1}%
\tilde{\varepsilon}_{s^{0}}(n_{a}^{0})\tilde{\varepsilon}_{s^{1}}(n_{b}^{1})%
\tau _{2^{s^{0}},n_{a}^{0}}^{0}\tau
_{2^{s^{1}},n_{b}^{1}}^{1}W_{s^{0},s^{1}}f\left( x^{0},x^{1}\right) =o\left(
1\right) \end{equation*}}{\normalsize as $\min \left\{
n_{a}^{0},n_{b}^{1}\right\} \rightarrow \infty $. Here we have used the fact
that $\left( x^{0},x^{1}\right) $ is a Lebesgue point and hence $%
W_{s^{0},s^{1}}f\left( x^{0},x^{1}\right) $ approaches to zero as $\min
\left\{ s^{0},s^{1}\right\} \rightarrow \infty $. } {\normalsize Now,
combining the last one together with (\ref{j1-j4}), (\ref{J1}), (\ref{J11}),
(\ref{J12}), (\ref{J13}) we arrive at \begin{equation} \lim_{\min \left\{
n_{a}^{0},n_{b}^{1}\right\} \rightarrow \infty }J_{1}\left(
x^{0},x^{1}\right) =0. \label{J1=0} \end{equation}} {\normalsize
Consequently, using the same technique one can establish the followings: \begin{%
equation*} \lim_{\min \left\{ n_{a}^{0},n_{b}^{1}\right\} \rightarrow \infty
}J_{k}\left( x^{0},x^{1}\right) =0,\ \ \ k=2,3,4. \end{equation*}}
{\normalsize We set \begin{equation*} \widetilde{t}_{n_{b}\left( s-1\right)
+k,n_{b}}=\left\{ \begin{array}{c} t_{n_{b}\left( s-1\right) +k,n_{b}},\ \
k<2^{s} \\ 0,\ \ k=2^{s}\end{array},\ \ s\in \mathbb{P}.\right. \end{%
equation*}Then, (\ref{V12}) may be estimated as follows }{\small \begin{%
equation} \left\vert V_{n_{b},2}^{\mathbb{T}_{1}}\right\vert \leq
\sum\limits_{s=0}^{|n_{b}|}\varepsilon _{s}\left( n_{b}\right) \left(
\sum\limits_{l=1}^{2^{s}-1}\left( \widetilde{t}_{n_{b}\left( s-1\right)
+l,n_{b}}-\widetilde{t}_{n_{b}\left( s-1\right) +l+1,n_{b}}\right)
l\left\vert K_{l}\right\vert \right) . \label{up1} \end{equation}%
}{\normalsize Consequently}, {\small \begin{eqnarray*} \left\vert
f_{x^{0}x^{1}}\ast \left( V_{n_{a}^{0},2}^{\mathbb{T}_{0}}\otimes
V_{n_{b}^{1},2}^{\mathbb{T}_{1}}\right) \right\vert &\leq
&\sum\limits_{s^{0}=0}^{|n_{a}^{0}|}\varepsilon _{s^{0}}\left(
n_{a}^{0}\right) \sum\limits_{s^{1}=0}^{|n_{b}^{1}|}\varepsilon
_{s^{1}}\left( n_{b}^{1}\right)
\sum\limits_{l^{0}=1}^{2^{s^{0}}-1}\sum%
\limits_{l^{1}=1}^{2^{s^{1}}-1}l^{0}l^{1} \\ &&\times \left( \widetilde{t}%
_{n_{a}^{0}\left( s^{0}-1\right) +l^{0},n_{a}^{0}}^{0}-\widetilde{t}%
_{n_{a}^{0}\left( s^{1}-1\right) +l^{0}+1,n_{a}^{0}}^{0}\right) \\ &&\times
\left( \widetilde{t}_{n_{b}\left( s^{1}-1\right) +l^{1},n_{b}}^{1}-%
\widetilde{t}_{n_{b}\left( s^{1}-1\right) +l^{1}+1,n_{b}}^{1}\right) \\
&&\times \left( \left\vert f_{x^{0}x^{1}}\right\vert \ast \left\vert
K_{l^{0}}\right\vert \otimes \left\vert K_{l^{1}}\right\vert \right) . \end{%
eqnarray*}}

Now, we define the following double sequence \begin{equation*}
U_{l^{0},l^{1}}\left( x^{0},x^{1}\right) :=\left\vert
f_{x^{0}x^{1}}\right\vert \ast \left\vert K_{l^{0}}\right\vert \otimes
\left\vert K_{l^{1}}\right\vert . \end{equation*}One can see that it is
bounded. Indeed, by the result of \cite{WeiszWLP3} we have \begin{equation}
\left\vert f_{x^{0}x^{1}}\right\vert \ast \left( \left\vert
K_{l^{0}}\right\vert \otimes \left\vert K_{l^{1}}\right\vert \right) \leq
\frac{1}{l^{0}l^{1}}\sum\limits_{i^{0}=0}^{\left\vert l^{0}\right\vert
}\sum\limits_{i^{1}=0}^{\left\vert l^{1}\right\vert
}2^{i^{0}+i^{1}}W_{i^{0},i^{1}}f\left( x^{0},x^{1}\right) . \label{zz}
\end{equation}
Take a natural number $M:=M\left( x^{0},x^{1}\right) $ such that
\begin{equation*} W_{i^{0},i^{1}}f\left( x^{0},x^{1}\right) <1,\ \
i^{0},i^{1}>M. \end{equation*}Then, by \eqref{wl2}, \eqref{wl3} and \eqref{zz%
}, one finds
\small
\begin{eqnarray*} U_{l^{0},l^{1}}\left( x^{0},x^{1}\right)
&\leq &\frac{1}{l^{0}l^{1}}\sum\limits_{0\leq i^{0}\leq M}\sum\limits_{0\leq
i^{1}\leq \left\vert l^{1}\right\vert }2^{i^{0}+i^{1}}W_{i^{0},i^{1}}f\left(
x^{0},x^{1}\right) \\ &&+\frac{1}{l^{0}l^{1}}\sum\limits_{0\leq i^{0}\leq
\left\vert l^{0}\right\vert }\sum\limits_{0\leq i^{1}\leq
M}2^{i^{0}+i^{1}}W_{i^{0},i^{1}}f\left( x^{0},x^{1}\right) \\ &&+\frac{1}{%
l^{0}l^{1}}\sum\limits_{M<i^{0}\leq \left\vert l^{0}\right\vert
}\sum\limits_{M<i^{1}\leq \left\vert l^{1}\right\vert
}2^{i^{0}+i^{1}}W_{i^{0},i^{1}}f\left( x^{0},x^{1}\right) \\ &\leq &\frac{1}{%
l^{0}l^{1}}\sum\limits_{0\leq i^{0}\leq M}\sum\limits_{0\leq i^{1}\leq
\left\vert l^{1}\right\vert }2^{2i^{0}+i^{1}}H_{i^{1}}^{\left( 2\right)
}f\left( x^{0},x^{1}\right) \\ &&+\frac{1}{l^{0}l^{1}}\sum\limits_{0\leq
i^{0}\leq \left\vert l^{0}\right\vert }\sum\limits_{0\leq i^{1}\leq
M}2^{i^{0}+2i^{1}}H_{i^{0}}^{\left( 1\right) }f\left( x^{0},x^{1}\right) \\
&&+\frac{1}{l^{0}l^{1}}\sum\limits_{M<i^{0}\leq \left\vert l^{0}\right\vert
}\sum\limits_{M<i^{1}\leq \left\vert l^{1}\right\vert
}2^{i^{0}+i^{1}}W_{i^{0},i^{1}}f\left( x^{0},x^{1}\right) \\ &\leq &C\left(
x^{0},x^{1}\right) <\infty . \end{eqnarray*}
\normalsize
Now, we are going to establish
\begin{equation} U_{l^{0},l^{1}}\left( x^{0},x^{1}\right) =o\left( 1\right)
\text{ \ as \ }\min \left\{ l^{0},l^{1}\right\} \rightarrow \infty . \label{%
00} \end{equation}Indeed, again employing the estimations (\ref{wl2}), (\ref{%
wl3}) and (\ref{GG}) we obtain
\small
\begin{eqnarray*} \left\vert
f_{x^{0}x^{1}}\right\vert \ast \left( \left\vert K_{l^{0}}\right\vert
\otimes \left\vert K_{l^{1}}\right\vert \right) &\leq &\frac{1}{l^{0}l^{1}}%
\sum\limits_{0\leq i^{0}\leq \omega \left( 1/l^{0}\right)
}\sum\limits_{0\leq i^{1}\leq \omega \left( 1/l^{1}\right)
}2^{2i^{0}+2i^{1}} \\ &&+\frac{1}{l^{0}l^{1}}\sum\limits_{0\leq i^{0}\leq
\omega \left( 1/l^{0}\right) }\sum\limits_{\omega \left( 1/l^{1}\right)
<i^{1}\leq l^{1}}2^{2i^{0}+i^{1}}H_{i^{1}}^{\left( 1\right) }f\left(
x^{0},x^{1}\right) \\ &&+\frac{1}{l^{0}l^{1}}\sum\limits_{\omega \left(
1/l^{0}\right) <i^{0}\leq l^{0}}\sum\limits_{0\leq i^{1}\leq \omega \left(
1/l^{1}\right) }2^{i^{0}+2i^{1}}H_{i^{0}}^{\left( 0\right) }f\left(
x^{0},x^{1}\right) \\ &&+\frac{1}{l^{0}l^{1}}\sum\limits_{\omega \left(
1/l^{0}\right) <i^{0}\leq l^{0}}\sum\limits_{\omega \left( 1/l^{1}\right)
<i^{1}\leq l^{1}}W_{i^{0},i^{1}}f\left( x^{0},x^{1}\right) \\ &\leq &C\left(
x^{0},x^{1}\right) \left\{ \frac{1}{\sqrt{l^{0}}}+\frac{1}{\sqrt{l^{1}}}%
+\sup\limits_{i^{s}>\delta \left( i^{s}\right) ,s=0,1}W_{i^{0},i^{1}}f\left(
x^{0},x^{1}\right) \right\} \\ &=&o\left( 1\right) \text{ as }\min \left\{
l^{0},l^{1}\right\} \rightarrow \infty . \end{eqnarray*}
\normalsize
It remains to
demonstrate that conditions (a) and (b) are correct. The validity of
condition (b) is simply checked: \begin{equation} t_{k,n_{a}^{j}}^{j}\leq
t_{0,n_{a}^{j}}^{j}=o\left( 1\right) \text{ as }n_{a}^{j}\rightarrow \infty
\left( j=0,1\right) . \label{t1} \end{equation}

Denote
\begin{equation*} \xi
_{s^{0},n^{0}}^{0}:=\sum\limits_{l^{0}=1}^{2^{s^{0}}-1}l^{0}\left(
\widetilde{t}_{n_{a}^{0}\left( s^{0}-1\right) +l^{0},n_{a}^{0}}^{0}-%
\widetilde{t}_{n_{a}^{0}\left( s^{1}-1\right) +l^{0}+1,n_{a}^{0}}^{0}\right)
. \end{equation*}
\begin{equation*} \xi
_{s^{1},n^{1}}^{1}:=\sum\limits_{l^{1}=1}^{2^{s^{1}}-1}l^{0}\left(
\widetilde{t}_{n_{b}^{1}\left( s^{1}-1\right) +l^{1},n_{b}^{1}}^{1}-%
\widetilde{t}_{n_{b}^{1}\left( s^{1}-1\right) +l^{1}+1,n_{b}^{1}}^{1}\right)
. \end{equation*}
Regarding (a), we have
\small
\begin{eqnarray}
\sum\limits_{s^{0}=1}^{|n_{a}^{0}|}\varepsilon _{s^{0}}\left(
n_{a}^{0}\right) \xi _{s^{0},n^{0}}^{0} \label{t2}
&=&\sum\limits_{s^{0}=1}^{|n_{a}^{0}|}\varepsilon _{s^{0}}\left(
n_{a}^{0}\right) \left\{ \sum\limits_{l^{0}=1}^{2^{s^{0}}-2}l^{0}\left(
t_{n_{a}^{0}\left( s^{0}-1\right) +l^{0},n_{a}^{0}}^{0}-t_{n_{a}^{0}\left(
s^{1}-1\right) +l^{0}+1,n_{a}^{0}}^{0}\right) \right. \notag \\ &&\left.
+\left( 2^{s^{0}}-1\right) t_{n_{a}^{0}\left( s^{0}-1\right)
+2^{s^{0}}-1,n_{a}^{0}}^{0}\right\} \notag \\
&=&\sum\limits_{s^{0}=1}^{|n_{a}^{0}|}\varepsilon _{s^{0}}\left(
n_{a}^{0}\right) \left\{ \sum\limits_{l^{0}=1}^{2^{s^{0}}-2}\left(
l^{0}t_{n_{a}^{0}\left( s^{0}-1\right) +l^{0},n_{a}^{0}}^{0}-\left(
l^{0}+1\right) t_{n_{a}^{0}\left( s^{1}-1\right)
+l^{0}+1,n_{a}^{0}}^{0}\right) \right. \notag \\ &&\left.
+\sum\limits_{l^{0}=1}^{2^{s^{0}}-2}t_{n_{a}^{0}\left( s^{1}-1\right)
+l^{0}+1,n_{a}^{0}}^{0}+\left( 2^{s^{0}}-1\right) t_{n_{a}^{0}\left(
s^{0}-1\right) +2^{s^{0}}-1,n_{a}^{0}}^{0}\right\} \notag \\
&=&\sum\limits_{s^{0}=1}^{|n_{a}^{0}|}\varepsilon _{s^{0}}\left(
n_{a}^{0}\right) \left(
\sum\limits_{l^{0}=1}^{2^{s^{0}}-2}t_{n_{a}^{0}\left( s^{0}-1\right)
+l^{0}+1,n_{a}^{0}}^{0}\right) \notag \\ &\leq
&\sum\limits_{s^{0}=1}^{|n_{a}^{0}|}\varepsilon _{s^{0}}\left(
n_{a}^{0}\right) \left( \tau _{2^{s^{0}},n_{a}^{0}}-\tau
_{2^{s^{0}-1},n_{a}^{0}}\right) +\tau _{1,n_{a}^{0}}^{0} \notag \\ &\leq
&\sum\limits_{s^{0}=1}^{|n_{a}^{0}|-1}\left\vert \varepsilon _{s^{0}}\left(
n_{a}^{0}\right) -\varepsilon _{s^{0}+1}\left( n_{a}^{0}\right) \right\vert
\tau _{2^{s^{0}},n_{a}^{0}}^{0}\leq c<\infty . \notag \end{eqnarray}
\normalsize
Analogously we can prove the same for $\left\{
\xi_{l^{1},n_{b}^{1}}^{1}:l^{1},n_{b}^{1}\in \mathbb{N}\right\} .$

Now, by Lemma \ref{Unm} we infer that
\begin{equation} \left\vert
f_{x^{0}x^{1}}\ast \left( V_{n_{a}^{0},2}^{\mathbb{T}_{0}}\otimes
V_{n_{b}^{1},2}^{\mathbb{T}_{1}}\right) \right\vert =o\left( 1\right) \text{
as }\min \left\{ n_{a}^{0},n_{b}^{1}\right\} \rightarrow \infty . \label{22}
\end{equation}

Now, let us consider
\small
\begin{eqnarray*} \left\vert
f_{x^{0}x^{1}}\ast \left( V_{n_{a}^{0},1}^{\mathbb{T}_{0}}\otimes
V_{n_{b}^{1},2}^{\mathbb{T}_{1}}\right) \right\vert &\leq &\left(
\sum\limits_{s^{0}=0}^{\left\vert n_{a}^{0}\right\vert +1}\tilde{\varepsilon}%
_{s^{0}}(n_{a}^{0})\tau _{2^{s^{0}},n_{a}^{0}}^{0}\right) \\[2mm] &&\times
\left( \sum\limits_{s^{1}=0}^{|n_{b}^{1}|}\varepsilon _{s^{1}}\left(
n_{b}^{1}\right) \xi _{s^{1},n^{1}}^{1}\right) \left( \left\vert
f_{x^{0}x^{1}}\right\vert \ast \left( D_{2^{s^{0}}}\otimes \left\vert
K_{l^{1}}\right\vert \right) \right) \\ &&+\left(
\sum\limits_{s^{0}=1}^{\left\vert n_{a}^{0}\right\vert }2^{s^{0}}\tau
_{2^{s^{0}},n_{a}^{0}}^{0}\right) \left(
\sum\limits_{l^{1}=1}^{2^{s^{1}}-1}\xi _{s^{1},n^{1}}^{1}\right) \left(
\left\vert f_{x^{0}x^{1}}\right\vert \ast \left( D_{2^{s^{0}}}\otimes
\left\vert K_{l^{1}}\right\vert \right) \right) .
\end{eqnarray*}
\normalsize
Since\begin{equation*} \left\vert f_{x^{0}x^{1}}\right\vert \ast \left(
D_{2^{s^{0}}}\otimes \left\vert K_{l^{1}}\right\vert \right) \leq \frac{%
2^{s^{0}}}{l^{1}}\sum\limits_{i^{1}=0}^{\left\vert l^{1}\right\vert
}2^{i^{1}}W_{2^{s^{0}},i^{1}}f\left( x^{0},x^{1}\right) \end{equation*}
and%
\begin{equation*} \sum\limits_{s^{0}=1}^{\left\vert n_{a}^{0}\right\vert
}2^{s^{0}}t_{2^{s^{0}},n_{a}^{0}}^{0}\leq 2\tau _{2^{\left\vert
n_{a}^{0}\right\vert },n_{a}^{0}}\leq 2 \end{equation*} together with (\ref{%
zero}), (\ref{t1}) we arrive at
\begin{equation*} \left\vert
f_{x^{0}x^{1}}\ast \left( V_{n_{a}^{0},1}^{\mathbb{T}_{0}}\otimes
V_{n_{b}^{1},2}^{\mathbb{T}_{1}}\right) \right\vert =o\left( 1\right) \text{
as }\min \left\{ n_{a}^{0},n_{b}^{1}\right\} \rightarrow \infty . \label{21}
\end{equation*}

By
the same argument one establish that \begin{equation*} \left\vert
f_{x^{0}x^{1}}\ast \left( V_{n_{a}^{0},2}^{\mathbb{T}_{0}}\otimes
V_{n_{b}^{1},1}^{\mathbb{T}_{1}}\right) \right\vert =o\left( 1\right) \text{
as }\min \left\{ n_{a}^{0},n_{b}^{1}\right\} \rightarrow \infty . \label{21}
\end{equation*}
This completes the proof.
\end{proof}

\begin{remark} Take two $\{\alpha _{n}^{(i)}\}\subset\left( 0,1\right]$, ($i=0,1$) sequences. Let $\mathbb{A}^{(i)}=(t_{k,n}^{i})$ be the corresponding matrix of transformations, where
$$t_{k,n}^{i}:=\frac{A_{k}^{\alpha _{n}^{i}-1}}{%
A_{n}^{\alpha _{n}^{i}}}.$$
Now, we consider the associated $(C,\alpha_n)$-means, i.e.
$$\mathcal{T}_{n^{i}}^{\mathbb{A}^{(i)}}\left( f\right) :=\sigma
_{n^{i}}^{\alpha _{n}^{i}}\left( f\right) =f\ast V_{n^{i}}^{\mathbb{A}^{(i)}},
$$
where
$$
V_{n^{i}}^{\mathbb{A}^{(i)}}=\frac{1}{A_{n^{i}}^{\alpha _{n}^{i}}}\sum%
\limits_{k=1}^{n^{i}}A_{n^{i}-k}^{\alpha _{n}^{i}-1}D_{k}.
$$
Let us define two-dimensional
operators resulting from the tensor product:
\begin{equation*}
\mathfrak{T}^{\mathbb{A}^{(0)},\mathbb{A}^{(1)}}f_{n^{0},n^{1}}f:=f\ast \left( V_{n^{0}}^{\mathbb{A}^{(0)}}\otimes V_{n^{1}}^{\mathbb{A}^{(1)}}\right) .
\end{equation*}
Assume that the sequences $\left(
\alpha _{n}^{i}:n^{i}\in N\right) ,i=0,1$ are stationary, i. e. $\alpha
_{n}^{i}=\alpha ^{i}>0.$ Then, by the result of \cite{Weisz-book2} one finds
\begin{equation}
\sup_{n}\left\Vert \mathcal{T}_{n^{i}}^{\mathbb{A}^{(i)}}\right\Vert _{L_{\infty }\left( \mathbb{I}\right) \rightarrow L_{\infty }
\left( \mathbb{I
}\right) }<\infty; \ \ i=0,1\label{binf}
\end{equation}
and consequently, according of Theorem \ref{mt1} the sequence $\{\mathfrak{T}^{\mathbb{A}^{(0)},\mathbb{A}^{(1)}}f\}$ converges almost everywhere to $f$ on $\mathbb{I}^{2}$, for $f\in L\ln L\left( \mathbb{I}^{2}\right)$. Furthermore,  by Theorem \ref{mt2}, we infer that the convergence occurs at every two-dimensional Walsh-Lebesgue point.

Now consider the case when $\alpha _{n^{i}}^{i}\rightarrow 0$ ($i=0,1$). Then, as is known, the inequality (\ref{binf})
does not hold, and therefore Theorems \ref{mt1} and \ref{mt2} are not applicable. 
Moreover, no matter how slowly $\alpha _{n^{i}}^{i}$ tends to zero,
the almost everywhere convergence does not occur even in the case of a sequence
of one-dimensional operators (see \cite{GoginavaJMAA23}, for details). Then it natural to find subsequences $\left( n_{a}^{i}:a\in \mathbb{N}\right) $ along which the
sequence of operators $\left\{\mathcal{T}_{n_a^{i}}^{\mathbb{A}^{(i)}}\right\}$ satisfies (\ref{binf}), and therefore $%
\mathfrak{T}^{\mathbb{A}^{(0)},\mathbb{A}^{(1)}}f_{n^{0}_a,n^{1}_b}f \rightarrow f$ almost everywhere on $\mathbb{I}^{2}$ for $f\in L\ln
L\left( \mathbb{I}^{2}\right)$. We stress that such kinds of subsequences can be found by 
Theorem 1 (iii).  Namely, a sufficient condition for those subsequences is given by 
\begin{equation}
\sup\limits_{a\in \mathbb{N}}\frac{1}{2^{|n_{a}^{i}|\alpha _{n}^{i}}}%
\sum_{k=0}^{|n_{a}^{i}|}|\varepsilon _{k}(n_{a}^{i})-\varepsilon
_{k+1}(n_{a}^{i})|2^{k\alpha _{n}^{i}}<\infty . \label{c2} \end{equation}%
Hence,  the subsequences satisfy condition (\ref{c2}), then the sequence
$\{\mathfrak{T}^{\mathbb{A}^{(0)},\mathbb{A}^{(1)}}_{n^{0}_a,n^{1}_b}f\}$ is convergent to $%
f\in L\ln L\left( \mathbb{I}^{2}\right) $ at every Walsh-Lebesgue point.
\end{remark}

\appendix 
\section{Walsh System} 

In this section, we recal some
necessary denotations about the dyadic analysis. A dyadic interval in $%
\mathbb{I}:=[0,1)$ means an interval in the form $I\left( l,k\right) :=\left[
\frac{l}{2^{k}},\frac{l+1}{2^{k}}\right) $ for some $k\in \mathbb{N}$, $%
0\leq l<2^{k}$. Given $k\in \mathbb{N}$ and $x\in \mathbb{I},$ $I_{k}(x)$
denotes the dyadic interval of length $2^{-k}$ which contains the point $x$.
For the sake of shortness, $I_{n}:=I_{n}\left( 0\right) \left( n\in \mathbb{N%
}\right)$ is denoted as $\overline{I}_{k}\left( x\right) :=I\backslash
I_{k}\left( x\right) $. Given $n\in \mathbb{N}$, $n\neq 0$ by $\left\vert
n\right\vert $ which indicates $2^{\left\vert n\right\vert }\leq
n<2^{\left\vert n\right\vert +1}$. Let\begin{equation*} x=\sum%
\limits_{n=0}^{\infty }x_{n}2^{-\left( n+1\right) } \end{equation*}be the
dyadic expansion of $x\in \mathbb{I}$, where $x_{n}\in \{0,1\}$. If $x $ is
a dyadic rational number the expansion which terminate in $0^{\prime } $s is
chosen. By $\dotplus $, the logical addition on $\mathbb{I}$ is denoted,
i.e. for any $x,y\in \mathbb{I}$ \begin{equation*} x\dotplus
y:=\sum\limits_{n=0}^{\infty }\left\vert x_{n}-y_{n}\right\vert 2^{-\left(
n+1\right) }. \end{equation*} For every $n\in \mathbb{N}$ the following
binary expansion can be written\begin{equation*} n=\sum\limits_{k=0}^{\infty
}\varepsilon _{k}\left( n\right) 2^{k}, \end{equation*}where $\varepsilon
_{k}\left( n\right) =0$ or $1$ for $k\in \mathbb{N}$. The numbers $%
\varepsilon _{k}\left( n\right) $ will be called the binary coefficients of $%
n$. The Rademacher system is defined by \begin{equation*} \rho _{n}\left(
x\right) :=\left( -1\right) ^{x_{n}}\text{ \ \ }\left( x\in \mathbb{I},n\in
\mathbb{N}\right) . \end{equation*} The Walsh-Paley system is defined as the
sequence of the Walsh-Paley functions:\begin{equation*} w_{n}\left( x\right)
:=\prod\limits_{k=0}^{\infty }\left( \rho _{k}\left( x\right) \right)
^{\varepsilon _{k}\left( n\right) }=\left( -1\right)
^{\sum\limits_{k=0}^{\left\vert n\right\vert }\varepsilon _{k}\left(
n\right) x_{k}}\quad \left( x\in \mathbb{I},n\in \mathbb{N}\right) . \end{%
equation*} The Walsh-Dirichlet kernel is defined by \begin{equation*}
D_{n}:=\sum\limits_{k=0}^{n-1}w_{k}\quad \left( n\in \mathbb{N}\right) ,\
D_{0}=0. \end{equation*} It is well-known \cite{GES,SWS} that \begin{equation%
} D_{2^{n}}=2^{n}\mathbf{1}_{I_{n}}. \label{dir2} \end{equation} Given $f\in
L_{1}\left( \mathbb{I}\right) $ its partial sums of the Walsh-Fourier series
are defined by $S_{m}\left( f\right) :=\sum\limits_{i=0}^{m-1}\widehat{f}%
\left( i\right) w_{i}.$ Denote \begin{equation*} E_{n}\left( f\right)
:=S_{2^{n}}\left( f\right) ,\quad E^{\ast }\left( f\right)
:=\sup\limits_{n\in \mathbb{N}}\left\vert E_{n}\left( f\right) \right\vert .
\end{equation*} The dyadic Hardy space $H_{1}(\mathbb{I})$ is the set of all
functions $f\in L_{1}\left( \mathbb{I}\right) $ such that \begin{equation*}
\left\Vert f\right\Vert _{H_{1}}:=\left\Vert E^{\ast }\left( f\right)
\right\Vert _{1}<\infty . \end{equation*} 

\begin{thebibliography}{10}

\bibitem{Feff-div-1971}
Ch. Fefferman.
\newblock On the divergence of multiple {F}ourier series.
\newblock {\em Bull. Amer. Math. Soc.}, 77:191--195, 1971.

\bibitem{Fine1955cesaro}
J.~Fine.
\newblock Ces\`aro summability of {W}alsh-{F}ourier series.
\newblock {\em Proceedings of the National Academy of Sciences of the United
	States of America}, 41(8):588, 1955.

\bibitem{GatProc-2000}
G.~G\'{a}t.
\newblock On the divergence of the {$(C,1)$} means of double {W}alsh-{F}ourier
series.
\newblock {\em Proc. Amer. Math. Soc.}, 128(6):1711--1720, 2000.

\bibitem{GatGogAMSDiv2}
G.~G\'{a}t and U.~Goginava.
\newblock On the divergence of {N}\"{o}rlund logarithmic means of
{W}alsh-{F}ourier series.
\newblock {\em Acta Math. Sin. (Engl. Ser.)}, 25(6):903--916, 2009.

\bibitem{KarTens2016}
G.~G\'{a}t and G.~Karagulyan.
\newblock On convergence properties of tensor products of some operator
sequences.
\newblock {\em J. Geom. Anal.}, 26(4):3066--3089, 2016.

\bibitem{Gets-ContDiv-1985}
R.~Getsadze.
\newblock A continuous function with an almost everywhere divergent multiple
{F}ourier series in the {W}alsh-{P}aley system.
\newblock {\em Mat. Sb. (N.S.)}, 128(170)(2):269--286, 288, 1985.

\bibitem{Gets2007}
R.~Getsadze.
\newblock On {C}es\`aro summability of {F}ourier series with respect to double
complete orthonormal systems.
\newblock {\em J. Anal. Math.}, 102:209--223, 2007.

\bibitem{GetsDiv2013}
R.~Getsadze.
\newblock Essential divergence in measure of multiple orthogonal {F}ourier
series.
\newblock {\em Real Anal. Exchange}, 39(1):91--100, 2013/14.

\bibitem{Gets-ConDiv-2020}
R.~Getsadze.
\newblock On the divergence of double {F}ourier-{W}alsh-{P}aley series of
continuous functions.
\newblock {\em Acta Sci. Math. (Szeged)}, 86(1-2):287--302, 2020.

\bibitem{Gets-ContDiv-2021}
R.~Getsadze.
\newblock On the divergence of double {F}ourier-{W}alsh and
{F}ourier-{W}alsh-{K}aczmarz series of continuous functions.
\newblock {\em Acta Math. Hungar.}, 164(1):51--65, 2021.

\bibitem{GogPositivity}
U.~Goginava.
\newblock Almost everywhere summability of two-dimensional {W}alsh-{F}ourier
series.
\newblock {\em Positivity}, 26(4):Paper No. 63, 22, 2022.

\bibitem{GoginavaJMAA23}
U.~Goginava.
\newblock Almost everywhere divergence of ces\'aro means with varying
parameters of {W}alsh-{F}ourier series.
\newblock {\em Journal of Mathematical Analysis and Applications}, 2023.

\bibitem{GogGogLebPoints}
U.~Goginava and L.~Gogoladze.
\newblock Pointwise summability of {V}ilenkin-{F}ourier series.
\newblock {\em Publ. Math. Debrecen}, 79(1-2):89--108, 2011.

\bibitem{GMFAA}
U~Goginava and F~Mukhamedov.
\newblock Uniform boundedness of sequence of operators associated with the
{W}alsh system and their pointwise convergence.
\newblock {\em J. Fourier Anal. Appl.}, 30(2):Paper No. 24, 21, 2024.

\bibitem{GoginavaMatrix}
U.~Goginava and K.~Nagy.
\newblock Matrix summability of {W}alsh-{F}ourier series.
\newblock {\em Mathematics}, 10(14), 2022.

\bibitem{GoginavaQM2022}
U.~Goginava and K.~Nagy.
\newblock Some properties of the walsh-nörlund means.
\newblock {\em Quaestiones Mathematicae}, 2022.

\bibitem{GES}
B.~Golubov, A.~Efimov, and V.~Skvortsov.
\newblock {\em Walsh series and transforms}, volume~64 of {\em Mathematics and
	its Applications (Soviet Series)}.
\newblock Kluwer Academic Publishers Group, Dordrecht, 1991.
\newblock Theory and applications, Translated from the 1987 Russian original by
W. R. Wade.

\bibitem{Herriot2}
John~G. Herriot.
\newblock N\"{o}rlund summability of double {F}ourier series.
\newblock {\em Trans. Amer. Math. Soc.}, 52:72--94, 1942.

\bibitem{Herriot1}
John~G. Herriot.
\newblock N\"{o}rlund summability of multiple {F}ourier series.
\newblock {\em Duke Math. J.}, 11:735--754, 1944.

\bibitem{jessen1935note}
B.~Jessen, J.~Marcinkiewicz, and A.~Zygmund.
\newblock Note on the differentiability of multiple integrals.
\newblock {\em Fundamenta Mathematicae}, 25(1):217--234, 1935.

\bibitem{KarDiv1989}
G.~A. Karagulyan.
\newblock Divergence of double {F}ourier series in complete orthonormal
systems.
\newblock {\em Izv. Akad. Nauk Armyan. SSR Ser. Mat.}, 24(2):147--159, 200,
1989.

\bibitem{KarDiv2013}
G.~A. Karagulyan.
\newblock Equivalence of martingales and some related problems.
\newblock {\em Izv. Nats. Akad. Nauk Armenii Mat.}, 48(2):15--38, 2013.

\bibitem{KazDiv2003}
K.~S. Kazarian.
\newblock A complete orthonormal system of divergence.
\newblock {\em C. R. Math. Acad. Sci. Paris}, 337(2):85--88, 2003.

\bibitem{KazDiv2004}
K.~S. Kazarian.
\newblock A complete orthonormal system of divergence.
\newblock {\em J. Funct. Anal.}, 214(2):284--311, 2004.

\bibitem{KazDiv1991}
K.~S. Kazaryan.
\newblock Divergent orthogonal {F}ourier series.
\newblock {\em Mat. Sb.}, 182(7):985--1008, 1991.

\bibitem{kolmogoroff1923serie}
A.~Kolmogoroff.
\newblock Une s{\'e}rie de {F}ourier-{L}ebesgue divergente presque partout.
\newblock {\em Fundamenta mathematicae}, 4(1):324--328, 1923.

\bibitem{kolmogororf1926s}
A.~Kolmogoroff.
\newblock Une s{\'e}rie de {F}ourier-{L}ebesgue divergente partout.
\newblock {\em CR Acad. ScL Paris}, 183, 1926.

\bibitem{marcinkiewicz1939summability}
J.~Marcinkiewicz and A.~Zygmund.
\newblock On the summability of double {F}ourier series.
\newblock {\em Fundamenta Mathematicae}, 32(1):122--132, 1939.

\bibitem{MorSchWade-1992}
F.~M\'{o}ricz, F.~Schipp, and W.~R. Wade.
\newblock Ces\`aro summability of double {W}alsh-{F}ourier series.
\newblock {\em Trans. Amer. Math. Soc.}, 329(1):131--140, 1992.

\bibitem{OleCont-1961}
A.~M. Olevski\u{\i}.
\newblock Divergent {F}ourier series of continuous functions.
\newblock {\em Dokl. Akad. Nauk SSSR}, 141:28--31, 1961.

\bibitem{OlevONS1961}
A.~M. Olevski\u{\i}.
\newblock Divergent series for complete systems in {$L\sp{2}$}.
\newblock {\em Dokl. Akad. Nauk SSSR}, 138:545--548, 1961.

\bibitem{OlevONS1963}
A.~M. Olevski\u{\i}.
\newblock The divergence of orthogonal series and the {F}ourier coefficients of
continuous functions with respect to complete systems.
\newblock {\em Sibirsk. Mat. \v{Z}.}, 4:647--656, 1963.

\bibitem{Oniani2023}
G.~Oniani.
\newblock Almost everywhere convergence of nets of operators and weak type
maximal inequalities.
\newblock {\em Fund. Math.}, 262(3):287--304, 2023.

\bibitem{schipp1975certain}
F~Schipp.
\newblock On certain rearrangements of series with respect to the {W}alsh
system.
\newblock {\em Mat. Zametki}, 18(2):193--201, 1975.

\bibitem{SWS}
F.~Schipp, W.~R. Wade, and P.~Simon.
\newblock {\em Walsh series}.
\newblock Adam Hilger, Ltd., Bristol, 1990.
\newblock An introduction to dyadic harmonic analysis, With the collaboration
of J. P\'{a}l.

\bibitem{Stein}
E.~M. Stein.
\newblock On limits of seqences of operators.
\newblock {\em Ann. of Math. (2)}, 74:140--170, 1961.

\bibitem{TorchBook}
A.~Torchinsky.
\newblock {\em Real-variable methods in harmonic analysis}.
\newblock Dover Publications, Inc., Mineola, NY, 2004.
\newblock Reprint of the 1986 original [Dover, New York; MR0869816].

\bibitem{WeiszWLP1}
F.~Weisz.
\newblock Convergence of singular integrals.
\newblock {\em Ann. Univ. Sci. Budapest. E\"{o}tv\"{o}s Sect. Math.},
32:243--256 (1990), 1989.

\bibitem{weisz1996cesaro}
F.~Weisz.
\newblock Ces\`aro summability of one-and two-dimensional {W}alsh-{F}ourier
series.
\newblock {\em Analysis Mathematica}, 22(3):229--242, 1996.

\bibitem{WeiszUnrestr-2000}
F.~Weisz.
\newblock The maximal {$(C,\alpha,\beta)$} operator of two-parameter
{W}alsh-{F}ourier series.
\newblock {\em J. Fourier Anal. Appl.}, 6(4):389--401, 2000.

\bibitem{Weisz-book2}
F.~Weisz.
\newblock {\em Summability of multi-dimensional {F}ourier series and {H}ardy
	spaces}, volume 541 of {\em Mathematics and its Applications}.
\newblock Kluwer Academic Publishers, Dordrecht, 2002.

\bibitem{WeiszWLP3}
F.~Weisz.
\newblock Walsh-{L}ebesgue points of multi-dimensional functions.
\newblock {\em Anal. Math.}, 34(4):307--324, 2008.

\end{thebibliography}
 \end{document}